\newcounter{Kulso}
\newenvironment{pszkod}[2][1]
{
  \setcounter{Kulso}{#1}
  \sffamily
  \vspace{\topsep}              
  \noindent
  {#2}
  \begin{list}{\rmfamily\arabic{Kulso}.\@arabic\c@enumiv.}
    {\usecounter{enumiv}%
     \renewcommand\p@enumiv{\arabic{Kulso}.}
     \renewcommand\theenumiv{\@arabic\c@enumiv}
     \setlength{\parsep}{0pt}
     \setlength{\itemsep}{0pt}
     \setlength{\topsep}{0pt}
    }
  \psz@cimke{begin}
}
{
  \psz@cimke{end}
  \end{list}
  \vspace{\topsep}
}
\newlength{\tabhossz}
\newcommand{\tab}{\par
\advance\@totalleftmargin \tabhossz
\advance\linewidth -\tabhossz
\parshape \@ne \@totalleftmargin \linewidth
\addtolength{\labelsep}{\tabhossz}%
}
\newcommand{\untab}{\par
\advance\@totalleftmargin -\tabhossz   
\advance\linewidth \tabhossz
\parshape \@ne \@totalleftmargin \linewidth
\addtolength{\labelsep}{-\tabhossz}%
}
\newcommand{\psz@cimke}[1]{\item[{\makebox[\labelwidth][l]{#1}}]}
\newcommand{\cL}{\ensuremath{\mathcal{L}}}
\newcommand{\cB}{\ensuremath{\mathcal{B}}}
\newcommand{\cE}{\ensuremath{\mathcal{E}}}
\newcommand{\cI}{\ensuremath{\mathcal{I}}}
\newcommand{\cF}{\ensuremath{\mathcal{F}}}
\newcommand{\cX}{\ensuremath{\mathcal{X}}}
\newcommand{\tD}{\tilde D}
\newcommand{\Rset}{\mathbb{R}}
\newcommand{\Zset}{\mathbb{Z}}
\newtheorem{claimnum}{Claim}
\newtheorem{theorem}{Theorem}
\newtheorem{problem}{Problem}
\newtheorem{lemma}{Lemma}
\newtheorem{definition}{Definition}
\newtheorem{cor}{Corollary}
\title{Blocking optimal arborescences}
\author{Attila Bern\'ath\thanks{MTA-ELTE Egerv\'ary Research Group,
Department of Operations Research, E\"otv\"os University, P\'azm\'any P\'eter s\'et\'any 1/C, Budapest, Hungary, H-1117.
Research supported by the Hungarian Scientific Research Fund (OTKA, grant number K109240), and by the ERC StG project PAAl  no.\ 259515.  Part of the research was done while the author was at Warsaw
University, Institute of Informatics, ul. Banacha 2, 02-097 Warsaw,
Poland. 
E-mail: {\tt bernath@cs.elte.hu}.} 
\and 
Gyula Pap\thanks{MTA-ELTE Egerv\'ary Research Group,
Department of Operations Research, E\"otv\"os University, P\'azm\'any P\'eter s\'et\'any 1/C, Budapest, Hungary, H-1117. Supported by  the Hungarian Scientific Research Fund (OTKA, grant number K109240).
E-mail: {\tt gyuszko@cs.elte.hu}.} 
}
\begin{document}
\maketitle


\begin{abstract}
The problem of covering minimum cost common bases of two matroids is
NP-complete, even if the two matroids coincide, and the costs are all
equal to 1. In this paper we show that the following special case is
solvable in polynomial time: given a digraph $D=(V,A)$ with a
designated root node $r\in V$ and arc-costs $c:A\to \Rset$, find a
minimum cardinality subset $H$ of the arc set $A$ such that $H$
intersects every minimum $c$-cost $r$-arborescence. By an
$r$-arborescence we mean a spanning arborescence of root $r$. The
algorithm we give solves a weighted version as well, in which a
nonnegative weight function $w:A\to \Rset_+$ (unrelated to $c$) is
also given, and we want to find a subset $H$ of the arc set such that
$H$ intersects every minimum $c$-cost $r$-arborescence, and
$w(H)=\sum_{a\in H}w(a)$ is minimum. The running time of the algorithm
is $O(n^3T(n,m))$, where $n$ and $m$ denote the number of nodes and
arcs of the input digraph, and $T(n,m)$ is the time needed for a
minimum $s-t$ cut computation in this digraph. A polyhedral
description is not given, and seems rather challenging.
\end{abstract}

\begin{quote}
{\bf Keywords: arborescences, polynomial algorithm, covering}
\end{quote}
\vspace{5mm}



\section{Introduction}

Let $D=(V,A)$ be a digraph with vertex set $V$ and arc set $A$. A
\textbf{spanning arborescence} is a subset $B\subseteq A$ that is a
spanning tree in the undirected sense, and every node has in-degree at
most one. Thus there is exactly one node with in-degree zero, we call
it the \textbf{root} node.  Equivalently, a spanning arborescence is a
subset $B\subseteq A$ with the property that there is a root node
$r\in V$ such that $\varrho_B(r)=0$, and $\varrho_B(v)=1$ for $v\in
V-r$, and $B$ contains no cycle. An \textbf{arborescence} will mean a
spanning arborescence, unless stated otherwise. If $r\in V$ is the
root of the spanning arborescence $B$ then we will say that $B$ is an
\textbf{$r$-arborescence}.  Arborescences form a natural directed
variant of spanning trees and they arise in applications where we want
to broadcast information in a network that contains directed
connections, for example wireless devices, too.

The Minimum Cost Arborescence Problem is the following: given a
digraph $D=(V,A)$, a designated root node $r\in V$ and a cost function
$c:A\to \Rset$, find an $r$-arborescence $B\subseteq A$ such that the
cost $c(B)=\sum_{b\in B}c(b)$ of $B$ is smallest possible. 
Fulkerson \cite{fulk} has given a two-phase algorithm solving this
problem, and he also characterized minimum cost arborescences.  A
natural variant of the Minimum Cost Arborescence Problem is where the
root is not fixed, i.e. find an arborescence in a digraph that has
minimum total cost. A simple reduction
shows that this \textit{global minimum cost arborescence problem} is
equivalent with the Minimum Cost Arborescence Problem stated above.
Kamiyama in \cite{Kamiyama} raised the following question.

\begin{problem}[Blocking Optimal $r$-arborescences]\label{prob:1}
Given a digraph $D=(V,A)$, a designated root node $r\in V$ and a cost
function $c:A\to \Rset$, find a subset $H$ of the arc set such that $H$
intersects every minimum cost $r$-arborescence, and $|H|$ is minimum.
\end{problem}

The minimum in Problem \ref{prob:1} measures the robustness of the
minimum cost arborescences, since it asks to delete a minimum
cardinality set of arcs in order to destroy all minimum cost
$r$-arborescences. One might ask why we fix the root of the
arborescences that we want to cover. The problem of finding the minimum
number of arcs that intersect every (globally) minimum cost
arborescence can be reduced to Problem \ref{prob:1}as follows. Add a new node
$r'$ to the digraph and connect $r'$ with every old node by high cost
and high multiplicity arcs. Then minimum cost arborescences in this
new instance will be necessarily rooted at $r'$, they will only
contain one arc leaving $r'$ by the high cost of these arcs, and an
optimal arc set intersecting these will not use the new arcs because
of their high multiplicity.

One interpretation of Problem \ref{prob:1} is that we want to
\emph{cover the minimum cost common bases of two matroids}: one
matroid being the graphic matroid of $D$ (in the undirected sense),
the other being a partition matroid with partition classes
$\delta^{in}(v)$ for every $v\in V-r$.  The problem of covering
minimum cost common bases of two matroids is in general NP-complete,
even if the two matroids coincide, and the costs are all equal to
1. That is, given a matroid with an independence oracle, it is
NP-complete to find a minimum cut (where a \textbf{cut} in a matroid
is an inclusionwise minimal subset that intersects every base). For a
proof, see for example Theorem 2.1 in \cite{minkor} and apply it to
the dual matroid. In \cite{qp:egres} we investigated the problem of
\emph{covering all minimum cost bases of a matroid}. We have given an
algorithm that solves the problem in polynomial time, provided that we
can find a minimum cut in certain minors of the matroid at hand. This
includes the class of graphic matroids, that is the following problem
is polynomial time solvable: given an undirected graph and a cost
function on its edges, find a minimum size set of edges which
intersects every minimum cost spanning tree. In fact this is a not too
hard exercise for the interested reader, however the details can be
found in \cite{qp:egres}.

 Kamiyama \cite{Kamiyama} solved special cases of  Problem \ref{prob:1} and he
 investigated some necessary and sufficient conditions for the minimum
 in this problem. In this paper we give a polynomial time algorithm
 solving Problem \ref{prob:1}.  In fact, our algorithm will solve the
 following, more general problem, too.

\begin{problem}\label{prob:1'}
Given a digraph $D=(V,A)$, a designated root node $r\in V$, a cost
function $c:A\to \Rset$ and a nonnegative weight function $w:A\to
\Rset_+$, find a subset $H$ of the arc set such that $H$ intersects every
minimum $c$-cost $r$-arborescence, and $w(H)$ is minimum.
\end{problem}


The rest of this paper is organized as follows. In Section
\ref{sec:related} we describe related work. In Section
\ref{sec:variants} we give variants of the problem based on
Fulkerson's characterization of minimum cost arborescences. These
variants are all equivalent with Problem \ref{prob:1} as simple
reductions show, so we deal with the variant that can be handled most
conveniently. In Section \ref{sec:special} we solve the special case
of covering all arborescences. This is indeed a very special case, but
the answer is very useful in the solution of the general case. Section
\ref{sec:covtight} contains our main result broken down into two
steps: in Section \ref{sec:minmin} we introduce a min-min formula that
gives a useful reformulation of our problem, and we give two proofs of
this min-min formula (in Section \ref{sec:minmin} and in Section
\ref{sec:reprove}). Finally --after introducing some essential results
and techniques in Section \ref{sec:solid}-- we give a polynomial time
algorithm solving Problems \ref{prob:1} and \ref{prob:1'} in Section
\ref{sec:algorithm}. We analyze the running time of this algorithm in
Section \ref{sec:runtime}.  In Section \ref{sec:fur} we give further
remarks: we reduce Problems \ref{prob:1} and \ref{prob:1'} to the
problem of covering the common bases of a graphic matroid and a
partition matroid, which might be a direction for further research.

\section{Related work}\label{sec:related}

The problem that we consider (Problem \ref{prob:1} and its weighted
version, Problem \ref{prob:1'}) is a typical \textbf{ covering
  problem}: given a hypergraph on ground set $S$ with hyperedge set $\cE\subseteq 2^S$,
find a minimum size subset $F$ of $S$ that intersects every hyperedge
in $\cE$. In Problem \ref{prob:1} the ground set $S$ is the set of the
arcs of a digraph, and $\cE$ is the set of optimal
$r$-arborescences. Every hypergraph covering problem defines a
\textbf{ packing problem} in a natural way: given a hypergraph with
hyperedge set $\cE\subseteq 2^S$, find a maximum size subset $\cE'$
of pairwise disjoint hyperedges. For our special case of Problem
\ref{prob:1}, this packing problem can be solved as follows: (a) for
any natural number $k$ we can find a minimum cost subset $B_k$ of the
arc set such that $B_k$ is the union of $k$ arc-disjoint
$r$-arborescences, and (b) we can thus find (with a logarithmic
search) the largest $k$ such that the cost of $B_k$ is $k$ times the
cost of an optimal $r$-arborescence. Hypergraph packing and covering
problems have an extensive literature, see for example the surveys
\cite{cornuejols2001combinatorial} or \cite[Part
  VIII.]{Schrijver}. Most of these results deal with certain
properties of hypergraphs (e.g. packing property, MFMC property, ideal
hypergraphs, hypergraphs that pack, etc.) that help solving the
packing and the covering problem. These results are LP based, as they
use an LP relaxation of the packing/covering problem pair.

We looked at each of these general hypergraph covering frameworks in
order to find a candidate to help solve Problems \ref{prob:1} and
\ref{prob:1'}, to no avail. It seems that these problems are outside
of the reach of the models known in the literature.

If the cost function $c$ is uniform then even the weighted
Problem \ref{prob:1'} is easy to solve (as the task is to find a
minimum weight $r$-cut, where an \textbf{$r$-cut} is the set of arcs
entering a non-empty subset $X\subseteq V-r$).  By a result of Edmonds
\cite{edmonds1973edge} (see also \cite[Theorem
  1.24]{cornuejols2001combinatorial}) a polyhedral description can
also be given. Namely the dominant of incidence vectors of $r$-cuts in
a digraph $D=(V,A)$ can be described as $\{x\in \Rset^A: x\ge 0,
x(B)\ge 1 $ for every $r$-arborescence $B\}$.

As we already mentioned in the introduction, Problems \ref{prob:1} and
\ref{prob:1'} are special cases of covering the minimum cost common
bases of two matroids. It is not too hard to show, that given 2
matroids $M_i=(S, \cI_i)$ ($i=1,2$) over the same ground set $S$ and a
cost function $c:S\to \Rset$, the minimum cost common bases of $M_1$
and $M_2$ are the common bases of two other matroids $M_1'$ and $M_2'$
over the same ground set (see details in Section \ref{sec:fur}). That is, Problems \ref{prob:1} and
\ref{prob:1'} fall in the framework of covering the common bases of two
matroids. Another natural special case of covering common bases is the
following. Given a bipartite graph $G=(S,T,E)$, find a minimum size
subset $H\subseteq E$ such that $G-H$ does not contain a perfect
matching. However, this problem was shown to be NP-complete by Joret
and Vetta \cite{joret}.

Another related problem is the minimum cut problem in matroids. A
\textbf{cut} of matroid is an inclusionwise minimal subset that
intersects every base. The minimum cut problem in a matroid is the
problem of finding a minimum size cut.  This problem is also known as
the \textbf{cogirth problem} (the cogirth of a matroid is the size of
the smallest cut, while the girth of a matroid is the size of the
smallest circuit). Since the cuts of a matroid are the circuits of the
dual matroid, the minimum cut problem is equivalent with the minimum
curcuit problem (a.k.a. \textbf{girth problem}). The (co)girth problem
in matroids is NP-hard in general (e.g. girth is NP-hard in a transversal matroid
\cite{mccormick}, and both girth and cogirth are NP-hard in a binary matroid \cite{vardy}).  
Polynomially solvable cases of
the (co)girth problem has received considerable attention. For
example, Geelen et al \cite{geelen2015computing}, motivated by
applications in coding theory, devised a polynomial time randomized
algorithm solving the (co)girth problem in binary matroids that arise
from a graphic matroid by adding a small perturbation. The minimum cut
problem is polynomially solvable in $k$-graphic-matroids (that is,
matroids of the form $kM$ where $M$ is graphic); this was generalized
by Király \cite{egresqp-09-05} for $k$-hypergraphic matroids
(i.e. matroids of the form $kM$ where $M$ is hypergraphic;
hypergraphic matroids were first defined by Lorea
\cite{lorea1975hypergraphes}). 


\textbf{Interdiction
  problems} are also related to our research. In interdiction problems there are two (unrelated)
objective functions like in Problem \ref{prob:1'} (lets call them cost
and weight), and we are interested in minimum cost objects satisfying
some requirement (for example spanning trees, matchings, or shortest
paths). The problem is then to increase the cost of these optimal
objects to the greatest extent by removing elements with some budget
for the total weight of the removed elements. We mention that there
are other types of interdiction problems, too, but these are closest
to our research. An example is the \textbf{$k$ most vital edges
  problem}: given an undirected graph with costs on the edges, and the
task is to delete $k$ edges so that the minimum cost of a spanning
tree in the remaining graph is as large as possible (in this case the
weights are all 1, and $k$ plays the role of the budget). In
\cite{frederickson1999increasing} it is shown that this problem is
NP-complete, and in \cite{liang1997finding} it is shown that it can be
solved in time $O(n^{k+1})$. The \textbf{matching interdiction
  problem} is the following. Given a graph $G$ with non-negative costs
and weights on its edges and a budget, find a subset of edges $R$ such
that the total cost of $R$ does not exceed the budget and the maximum
weight of a matching in $G-R$ is smallest possible (note that
  for this sole problem we switched the role of the cost and the
  weight functions for conventional reasons). In \cite[Theorem 3.3]{zenklusen2009blockers} it
is shown that this problem is NP-complete on simple bipartite graphs
with unit edge weights and unit interdiction costs, that is, it is NP
hard to find a minimum size set of edges $F$ of a bipartite graph $G$
such that $F$ intersects every maximum matching of $G$ (as mentioned
above, in \cite{joret} it is shown that this problem is NP-hard even
for bipartite graphs that admit a perfect matching).  In
\cite{zenklusen2010matching} a 4-approximation algorithm is given for
the special case when all edge weights are 1.  This is improved in
\cite{dinitz2013packing} where it is shown that there exists a
constant factor approximation algorithm for the matching interdiction
problem even without this restriction on the edge weights.

\section{The problem and its variants}\label{sec:variants}

In this paper we investigate Problem \ref{prob:1} and the more general
Problem \ref{prob:1'}.   For sake of simplicity we will mostly speak
about Problem \ref{prob:1}, and in Section \ref{sec:algorithm} we
sketch the necessary modifications of our algorithm needed to solve
Problem \ref{prob:1'}. Note that Problem \ref{prob:1'} with an integer
weight function $w$ can be reduced to Problem \ref{prob:1} by
replacing an arc $a\in A$ (of weight $w(a)$) with $w(a)$ parallel
copies (each of weight 1). This reduction is however not
polynomial. On the other hand, the algorithm we give for Problem
\ref{prob:1} can be simply modified to solve Problem \ref{prob:1'} in
strongly polynomial time.

Let us give some more definitions.  The arc set of the digraph $D$
will also be denoted by $A(D)$.  Given a digraph $D=(V,A)$ and a node
set $Z\subseteq V$, let $D[Z]$ be the digraph obtained from $D$ by
deleting the nodes of $V-Z$ (and all the arcs incident with them). If
$B\subseteq A$ is a subset of the arc set, then we will sometimes
abuse the notation by identifying $B$ and the graph $(V,B)$: thus
$B[Z]$ is obtained from $(V,B)$ by deleting the nodes of $V-Z$ (and
the arcs of $B$ incident with them). The set of arcs of $D$ entering
(leaving) $Z$ is denoted $\delta_D^{in}(Z)$ ($\delta_D^{out}(Z)$), the
number of these arcs is $\varrho_D(Z)=|\delta_D^{in}(Z)|$
($\delta_D(Z)=|\delta_D^{out}(Z)|$, respectively). We will omit $D$
from the subscript in these notations, if no confusion can arise.

The following theorem of Fulkerson characterizes the minimum cost
arborescences and leads us to a more convenient, but equivalent
problem.

\begin{theorem}[Fulkerson,  \cite{fulk}]\label{thm:fulk}
There exists a subset $A'\subseteq A$ of arcs (called \emph{tight
  arcs}) and a laminar family $\cL\subseteq 2^{V-r}$ such that an
$r$-arborescence is of minimum cost if and only if it uses only tight
arcs and it enters every member of $\cL$ exactly once. The set $A'$
and the family $\cL$ can be found in polynomial time.
\end{theorem}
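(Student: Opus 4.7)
My plan is to derive the theorem from LP duality applied to Edmonds' description of the optimal $r$-arborescence. Formulate the problem as the integer program
\[
\min \left\{ \sum_{a \in A} c(a) x_a \;:\; x(\delta^{in}(S)) \ge 1 \ \forall\, \emptyset \ne S \subseteq V-r, \ x \in \{0,1\}^A \right\},
\]
whose LP relaxation Edmonds proved has an integral optimum (the incidence vector of a minimum cost $r$-arborescence). The dual is
\[
\max \sum_{\emptyset \ne S \subseteq V-r} y_S \quad \text{s.t.} \quad \sum_{S : a \in \delta^{in}(S)} y_S \le c(a) \ \forall\, a \in A, \quad y \ge 0.
\]
If I can produce an optimal dual solution $y^*$ whose support $\cL := \{S : y^*_S > 0\}$ is a laminar family, the theorem follows: let $A'$ be the set of \emph{tight} arcs, i.e., those at which the dual constraint holds with equality. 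LP complementary slackness then says exactly that an $r$-arborescence $B$ is of minimum cost if and only if $B \subseteq A'$ and $\varrho_B(S) = 1$ for every $S \in \cL$.

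To produce such a dual together with a certifying primal I would run the Fulkerson/Chu-Liu-Edmonds primal-dual scheme. Without loss of generality $c \ge 0$, since adding a large constant to every arc cost shifts every arborescence's cost by the same amount. Initialize $y \equiv 0$ and $\cL = \emptyset$ and repeat: let $A'$ be the currently tight arcs and let $Z$ be the set of nodes not reachable from $r$ in $(V,A')$; if $Z = \emptyset$, terminate, otherwise pick a sink strong component $S$ of $A'[Z]$ (so $\delta^{in}_{A'}(S) = \emptyset$), uniformly raise $y_S$ until some arc of $\delta^{in}(S) \setminus A'$ becomes tight, then add $S$ to $\cL$. At termination, the standard uncontraction step produces an $r$-arborescence $B^* \subseteq A'$ with $\varrho_{B^*}(S) = 1$ for every $S \in \cL$, and then
\[
c(B^*) = \sum_{a \in B^*} \sum_{S \in \cL,\ a \in \delta^{in}(S)} y^*_S = \sum_{S \in \cL} y^*_S \, \varrho_{B^*}(S) = \sum_{S \in \cL} y^*_S
\]
matches the dual objective, so both $B^*$ and $y^*$ are LP-optimal and complementary slackness delivers the stated characterization of all minimum cost arborescences.

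The main obstacle is verifying laminarity of $\cL$. My preferred implementation runs the algorithm on a \emph{contracted} digraph: whenever $S$ enters $\cL$, contract $S$ to a single pseudo-node, subtract $y_S$ from the cost of every arc entering $S$, and continue on the smaller instance; the resulting family $\cL$ is then literally the set of pseudo-nodes produced by nested contractions, hence laminar by construction. Polynomial running time follows since each iteration either enlarges $A'$ (at most $|A|$ events) or contracts a strong component (at most $|V|-1$ events), and each iteration needs only a strong-component computation on $A'[Z]$ together with the ratio test determining the uniform increment of $y_S$.
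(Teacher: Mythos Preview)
The paper does not prove this theorem at all: it is quoted as Fulkerson's result and used as a black box, so there is no ``paper's own proof'' to compare against. Your proposal supplies exactly the classical argument that underlies the cited reference---LP duality for Edmonds' arborescence polytope combined with the Fulkerson/Chu--Liu--Edmonds primal--dual scheme, with laminarity ensured by running the algorithm on successively contracted digraphs---and it is correct in substance.

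Two small points of presentation. First, the component you select in each iteration is a \emph{source} strong component of $A'[Z]$ (one with no entering tight arc), not a sink; your parenthetical $\delta^{in}_{A'}(S)=\emptyset$ makes the intended meaning clear, but the word ``sink'' is the wrong one. Second, your justification that $c\ge 0$ may be assumed without loss of generality deserves one more sentence: after shifting all costs by a constant $K$, an optimal laminar dual for the shifted instance is obtained from one for the original instance by adding $K$ to each singleton variable $y_{\{v\}}$, so the set $A'$ of tight arcs is unchanged and the family $\cL$ changes only by possibly acquiring singletons (which every arborescence enters exactly once anyway). With these cosmetic fixes, the argument is a complete and standard proof of the cited theorem.
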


Since non-tight arcs do not play a role in our problems, we can forget
about them, so we assume that $A'=A$ from now on.

Let $\cL$ be a laminar family of subsets of $V$.  A spanning
arborescence $B\subseteq A$ in $D$ is called a \textbf{$\cL$-tight
  arborescence} if both of the following hold.
\begin{enumerate}
 \item
$|\delta ^{in} _B(F)|\le 1$ for all $F\in \cL$, and 
 \item
$|\delta ^{in} _B(F)|=0$ for all $F\in \cL$ containing the root $r$ of $B$. 
\end{enumerate}

We point out that the second condition in the above definition is
needed because we don't want to fix the root of the arborescences:
this will be natural in the solution we give for Problem
\ref{prob:1}. The following statement gives an equivalent definition
of $\cL$-tight arborescences; the proof is left to the reader.

\begin{claimnum}\label{cl:Ltight}
A spanning
arborescence $B\subseteq A$ in $D$ is  \emph{$\cL$-tight} if and only if $B[F]$ is an  arborescence for every $F\in \cL$.
\end{claimnum}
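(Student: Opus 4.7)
The claim is straightforward and splits naturally into the two directions, each handled by a case split on whether the root $r$ of $B$ lies in $F$ together with a simple in-degree count.

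For the forward direction, assume $B$ is $\cL$-tight and fix $F\in\cL$. If $r\in F$, condition (2) yields $\varrho_B(F)=0$, so every $v\in F$ has the same in-degree in $B[F]$ as in $B$; thus $r$ has in-degree $0$ in $B[F]$ and every other node of $F$ has in-degree $1$. Moreover, for any $v\in F$ the (unique) directed path from $r$ to $v$ in $B$ cannot leave and re-enter $F$ since $\varrho_B(F)=0$, so the path lies in $B[F]$, and $B[F]$ is a spanning $r$-arborescence of $F$. If $r\notin F$, then since $B$ is a spanning $r$-arborescence we must have $\varrho_B(F)\ge 1$; combined with condition (1) this gives $\varrho_B(F)=1$. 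Let $a=(u,v)$ be the unique arc of $B$ entering $F$. Then in $B[F]$ the node $v$ has in-degree $0$, and every other node $w\in F$ retains its in-degree $1$. For any $w\in F$, the unique $r\to w$ path in $B$ must enter $F$ via $a$, hence through $v$, and then cannot leave (else it would need a second arc entering $F$); therefore the $v\to w$ portion lies in $B[F]$. Acyclicity of $B[F]$ is inherited from $B$, so $B[F]$ is a spanning $v$-arborescence of $F$.

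For the reverse direction, assume $B[F]$ is an arborescence for every $F\in\cL$. Fix $F\in\cL$. In any arborescence exactly one vertex has in-degree $0$ and all others in-degree $1$. If $r\in F$, then $\varrho_B(r)=0$ forces $r$ to be the root of $B[F]$; every other $w\in F$ has in-degree $1$ in both $B[F]$ and $B$, which means no arc of $B$ enters $w$ from outside $F$, and no arc enters $r$ at all, so $\varrho_B(F)=0$, yielding conditions (1) and (2). If $r\notin F$, let $v$ be the root of $B[F]$; then $v$ has in-degree $0$ in $B[F]$ but $1$ in $B$, contributing exactly one arc to $\delta_B^{in}(F)$, while every other $w\in F$ has in-degree $1$ in both, contributing none; hence $\varrho_B(F)=1$, so condition (1) holds and (2) is vacuous.

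The only genuinely non-bookkeeping step is the connectivity argument in the forward direction, namely that the $r\to w$ path in $B$ cannot leave $F$ after entering it; this is where one uses both that $B$ is a tree and that only a single arc of $B$ enters $F$ (or none, in the case $r\in F$). Everything else is an in-degree count.
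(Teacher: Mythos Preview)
Your proof is correct. The paper leaves this claim to the reader, and your argument is precisely the natural one; the only remark worth making is that the connectivity step in the forward direction can be replaced by a pure edge count (since $B$ is a tree in the undirected sense, $B[F]$ is a forest with $|F|-1$ edges, hence a tree), but your path argument is equally valid.
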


 The result of Fulkerson leads us to the
following problem.

\begin{problem}\label{prob:2}
Given a digraph $D=(V,A)$, a designated root node $r\in V$ and a
laminar family $\cL\subseteq 2^V$, find a subset $H$ of the arc set such
that $H$ intersects every $r$-rooted $\cL$-tight arborescence and $|H|$ is
minimum.
\end{problem}

Note that in this problem we allow that $r\in F$ for some members
$F\in \cL$. By Fulkerson's Theorem above, if we have a polynomial
algorithm for Problem \ref{prob:2} then we can also solve Problem
\ref{prob:1} in polynomial time with this algorithm. However, this can
be reversed by the next claim. 

\begin{claimnum}\label{cl:1ekv2}
If we have a polynomial algorithm solving Problem \ref{prob:1} then we
can also solve Problem $\ref{prob:2}$ in polynomial time. 
\end{claimnum}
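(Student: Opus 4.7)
The plan is to exhibit a polynomial-time reduction from Problem \ref{prob:2} to Problem \ref{prob:1}. Given an instance $(D,r,\cL)$ of Problem \ref{prob:2}, I will construct a cost function $c:A\to\Rset$ on the same digraph such that the minimum $c$-cost $r$-arborescences are exactly the $r$-rooted $\cL$-tight arborescences. Once this is done, any algorithm solving Problem \ref{prob:1} on the instance $(D,r,c)$ also returns an optimal solution to Problem \ref{prob:2}.

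The natural choice is $c(a):=|\{F\in\cL:a\in\delta^{in}(F)\}|$, so that for any arborescence $B\subseteq A$ one has $c(B)=\sum_{F\in\cL}\varrho_B(F)$. For any $r$-arborescence $B$ and any $F\in\cL$ one has $\varrho_B(F)\ge 1$ whenever $r\notin F$ (since every node of $F$ must be reachable from $r$), while $\varrho_B(F)\ge 0$ always. Thus $c(B)\ge K:=|\{F\in\cL:r\notin F\}|$, with equality if and only if $\varrho_B(F)=1$ for every $F\in\cL$ not containing $r$ and $\varrho_B(F)=0$ for every $F\in\cL$ containing $r$, which is exactly the definition of an $r$-rooted $\cL$-tight arborescence. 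The cost function is clearly computable in polynomial time, since a laminar family on $V$ has at most $2|V|-1$ members.

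The only subtlety is a corner case: if the instance admits $r$-arborescences but none of them is $\cL$-tight, then $H=\emptyset$ trivially solves Problem \ref{prob:2}, but Problem \ref{prob:1} on $(D,r,c)$ may return a nonempty $H$ (blocking the min-$c$-cost arborescences, whose cost exceeds $K$). I will handle this by first computing a minimum $c$-cost $r$-arborescence (for instance via Fulkerson's algorithm) and comparing its cost with $K$; if they differ, I output $\emptyset$, otherwise I invoke the Problem \ref{prob:1} subroutine and return its output. Apart from this bookkeeping, the reduction is direct, so I do not anticipate a real obstacle; the heart of the argument is the sharp lower bound $c(B)\ge K$ and its characterization of the equality case via Claim \ref{cl:Ltight}.
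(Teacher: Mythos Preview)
Your proposal is correct and follows the same approach as the paper: define $c(a)$ as the number of sets in $\cL$ entered by $a$, so that minimum $c$-cost $r$-arborescences coincide with $r$-rooted $\cL$-tight arborescences. You are in fact more careful than the paper, which disposes of the corner case (no $\cL$-tight $r$-arborescence exists) in a single clause, whereas you spell out the check via comparing the optimum to $K$.
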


\begin{proof}
Let the cost of an arc $a\in A$ be equal to the number of sets 
$F\in\cL$ such that $a$ enters $F$.  Then an $r$-arborescence is of
minimum cost if and only if it is $\cL$-tight, if there exists an
$\cL$-tight arborescence at all.
\end{proof}

We point out that the construction in the above proof also shows how
to find an $\cL$-tight arborescence, if it exists at all.  So we can
turn our attention to Problem $\ref{prob:2}$.  However, in order to
have a more compact answer, it is more convenient to consider the
following, equivalent problem instead, in which the root is not
designated. 

\begin{problem}\label{prob:3}
Given a digraph $D=(V,A)$ and a
laminar family $\cL\subseteq 2^V$, find a subset $H$ of the arc set such
that $H$ intersects every $\cL$-tight arborescence and $|H|$ is
minimum.
\end{problem}

\begin{claimnum}\label{cl:2ekv3}
There exists a polynomial algorithm solving Problem \ref{prob:2} if
and only if there exists a polynomial algorithm solving Problem
\ref{prob:3}.
\end{claimnum}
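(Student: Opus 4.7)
The plan is to establish both implications by reducing one problem to the other using the same gadget: adjoining a new vertex together with many parallel copies of a single new arc. The multiplicity will force any optimal cover in the reduced instance to avoid the new arcs, so the projection to the original arc set is a faithful answer to the original problem.

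First I would handle the direction $(\Leftarrow)$. Given a polynomial algorithm for Problem \ref{prob:2} and an input $(D,\cL)$ of Problem \ref{prob:3} with $|V|\ge 2$ and $|A|=m$, I would build $(D',r^*,\cL')$ by adjoining a fresh vertex $r^*$, attaching $k:=m+1$ parallel arcs $(r^*,v)_1,\dots,(r^*,v)_k$ for every $v\in V$, and setting $\cL':=\cL\cup\{V\}$. The key structural claim is a bijection: $r^*$-rooted $\cL'$-tight arborescences of $D'$ correspond to triples $(v_0,i,B_D)$ where $B_D$ is a $v_0$-rooted $\cL$-tight arborescence of $D$, via $B=\{(r^*,v_0)_i\}\cup B_D$. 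To check this I would use $V\in\cL'$ and $r^*\notin V$ to force exactly one new arc in $B$, pinning down $v_0$ and $i$, and then translate the constraints on $F\in\cL$: if $v_0\in F$ the new arc already enters $F$, forcing $|\delta^{in}_{B_D}(F)|=0$; if $v_0\notin F$ the constraint reduces to $|\delta^{in}_{B_D}(F)|\le 1$, matching $\cL$-tightness of $B_D$ in $D$. Running the Problem \ref{prob:2} algorithm yields an optimum $H^*$ of size at most $m<k$ (because $A$ itself is feasible in $D'$), so for each $v$ some copy $(r^*,v)_{i^*}$ lies outside $H^*$; combined with the bijection this implies $H^*\cap A$ covers every $\cL$-tight arborescence of $D$. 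Optimality follows because every Problem \ref{prob:3} cover lifts to a Problem \ref{prob:2} cover of the same size on $D'$, so $|H^*|$ lower-bounds the Problem \ref{prob:3} optimum.

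Next I would mirror the construction for $(\Rightarrow)$. Given an input $(D,r,\cL)$ of Problem \ref{prob:2}, I would form $(D'',\cL'')$ by adjoining a new vertex $r'$, $k:=|A|+1$ parallel arcs $(r',r)_1,\dots,(r',r)_k$, and setting $\cL'':=\cL\cup\{V\}$. Since $r'$ has no incoming arc, every spanning arborescence of $D''$ is rooted at $r'$, uses exactly one copy $(r',r)_i$ by the $V\in\cL''$ condition, and its remaining arcs form an $r$-rooted spanning arborescence $B_D\subseteq A$. The same translation as before (specialized to $v_0=r$) identifies $\cL''$-tightness of the whole with $\cL$-tightness of $B_D$ as an $r$-arborescence of $D$. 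Applying the Problem \ref{prob:3} algorithm then yields an optimum $H^*$ with $|H^*|\le m<k$, so some copy $(r',r)_{i^*}$ lies outside $H^*$, and the projection $H^*\cap A$ is an optimal Problem \ref{prob:2} cover.

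The main delicate step in both directions is the translation of $\cL$-tightness across the gadget. It rests on the fact that the new vertex is outside every $F\in\cL$, so root-containment of members of $\cL$ is controlled entirely by the ``old root'' $v_0$ or $r$, and on the double role played by the extra set $V$: it selects a single entry point from the new vertex and simultaneously encodes the old root-containment constraint. A secondary but important point is that $k=m+1$ is large enough to guarantee that blocking all copies of a single new arc is never cheaper than the trivial cover $A$, which is what makes the projection to $A$ lossless; since $k$ grows polynomially in the input size, both reductions run in polynomial time.
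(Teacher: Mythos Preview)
Your argument is correct (modulo the swapped labels: what you call $(\Leftarrow)$ is actually the forward implication, and vice versa). The reduction from Problem~\ref{prob:3} to Problem~\ref{prob:2} via a new root with high-multiplicity arcs to all of $V$ is exactly what the paper does. For the other direction, however, the paper takes a much shorter route: given an instance $(D,r,\cL)$ of Problem~\ref{prob:2}, simply delete all arcs entering $r$. Since $r$-arborescences never use such arcs, this does not change the set of $r$-rooted $\cL$-tight arborescences, and in the resulting digraph $r$ is the only possible root of any spanning arborescence; hence the Problem~\ref{prob:3} algorithm applied to it solves Problem~\ref{prob:2} directly. Your gadget with a new vertex $r'$ and $|A|+1$ parallel arcs to $r$ also works and has the aesthetic appeal of symmetry with the first reduction, but it is more elaborate than necessary: the paper's observation avoids any new vertices, multiplicities, or bijection arguments for this direction.
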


\begin{proof}
Assume that there exists a polynomial time algorithm for Problem
\ref{prob:3} and consider an instance of Problem \ref{prob:2}. Since
arcs entering $r$ will not be used in an optimal solution of Problem
\ref{prob:2}, we can assume that there are no such arcs. But then the
$\cL$-tight arborescences are all rooted in $r$, so our algorithm covering all $\cL$-tight arborescences can be used for solving Problem \ref{prob:2}, too.

For the other direction assume that we have a polynomial time algorithm
solving Problem \ref{prob:2} and consider an instance of Problem
\ref{prob:3} given with $D=(V,A)$ and $\cL\subseteq 2^V$. Let
$V'=V+r'$ with a new node $r'$ and let $D'=(V',A')$ where $A'$
consists of the arcs in $A$ and an  arc of multiplicity $|A|+1$ from $r'$ to
every $v\in V$. Finally let $\cL'=\cL+\{V\}$. Now consider Problem
\ref{prob:2} with input $D', r'$ and $\cL'$.  Observe that $\cL$-tight
arborescences in $D$ and ($r'$-rooted) $\cL'$-tight arborescences in
$D'$ correspond to each-other in a natural way, and an optimal
solution to this instance of Problem \ref{prob:2} will not contain any
arc of form $r'v$ (if it contains one then it has to contain all
parallel copies, but we included those with a large
multiplicity).
\end{proof}

The main result of this paper is a polynomial algorithm solving
Problem \ref{prob:3}, and thus, by Claims \ref{cl:1ekv2} and
\ref{cl:2ekv3}, for Problems \ref{prob:1} and \ref{prob:2}.  For a
digraph $D=(V,A)$, and a laminar family $\cL$ of subsets of $V$, let
$\gamma (D,\cL)$ denote the minimum number of arcs deleted from $D$ to
obtain a digraph that does not contain an $\cL$-tight arborescence,
that is,
\begin{equation}\label{min1}
\gamma (D,\cL):=\min \{|H|: H\subseteq A \text{ such that }
D-H  \text{contains no $\cL$-tight arborescence }\}.
\end{equation}

Notice that an arborescence is \cL-tight if and only if it is
($\cL\cup\{V\}$)-tight, and $\gamma (D,\cL) = \gamma (D,\cL\cup\{V\})$. Therefore we will assume that $V\in \cL$ from now on.

\section{Covering  all arborescences -- a special case}\label{sec:special}

In the proof of our main result below, we will use its
special case when the laminar family $\cL=\{V\}$. This
special case amounts to the following well-known characterization of
the existence of a spanning arborescence.

\begin{lemma}\label{lem:arb}
For any digraph $D=(V,A)$ exactly one of the
following two alternatives holds:
\begin{enumerate}
 \item 
there exists a spanning arborescence,
\item 
there exist two disjoint non-empty subsets $Z_1,Z_2\subset V$ such that $\varrho_D(Z_1)=\varrho_D(Z_2)=0$.
\end{enumerate}
\end{lemma}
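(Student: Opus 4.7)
The plan is to first dispatch the mutual exclusivity, then prove existence of the two disjoint sets via the strongly connected component decomposition.

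For mutual exclusivity, I would argue as follows: suppose a spanning arborescence $B$ rooted at some $r$ exists. For any non-empty $Z \subseteq V - r$, the unique arc of $B$ entering the vertex of $Z$ closest to $r$ in $B$ enters $Z$ from $V\setminus Z$, so $\varrho_D(Z) \geq 1$. Given two disjoint non-empty sets $Z_1, Z_2 \subset V$, at most one can contain $r$, so the other is a subset of $V-r$ and has positive in-degree, ruling out alternative~2.

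For the main direction, assume no spanning arborescence exists. I would pass to the condensation $D'$, the DAG whose nodes are the strongly connected components of $D$. Observe that a subset $C \subseteq V$ formed by an SCC satisfies $\varrho_D(C)=0$ precisely when $C$ has no incoming arc from another SCC, i.e.\ when $C$ is a source of $D'$. Distinct source SCCs are automatically disjoint and non-empty, so it suffices to exhibit at least two of them.

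Suppose for contradiction that $D'$ has a unique source SCC $C$. Since $D'$ is a DAG, every SCC is reachable from some source in $D'$, hence from $C$; consequently every vertex of $V$ is reachable in $D$ from any fixed $r\in C$. A standard greedy construction (grow a tree from $r$ by repeatedly appending an arc from an already-reached vertex to an unreached one, using reachability to guarantee such an arc exists) produces a spanning $r$-arborescence, contradicting our assumption. Therefore $D'$ has at least two sources, which furnish the desired $Z_1, Z_2$. The only delicate step is the passage from ``every vertex reachable from $r$'' to ``spanning $r$-arborescence exists'', and that is a routine BFS/DFS construction rather than a real obstacle.
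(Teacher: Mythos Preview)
Your argument is correct. The mutual exclusivity is handled cleanly, and the forward direction via the condensation into strongly connected components is the standard route: if there were a unique source SCC, every vertex would be reachable from any vertex in it and a BFS/DFS tree would furnish the arborescence; hence there must be at least two source SCCs, which are automatically disjoint, non-empty, proper, and of in-degree zero.

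As for comparison: the paper does not actually prove this lemma. It is introduced as ``the following well-known characterization of the existence of a spanning arborescence'' and stated without proof, so there is nothing to compare against. Your write-up supplies exactly the kind of folklore argument the authors presumably had in mind. One tiny stylistic point: in the mutual-exclusivity paragraph, ``the vertex of $Z$ closest to $r$ in $B$'' is a slightly roundabout way to say that the $r$--$z$ path in $B$ (for any $z\in Z$) must cross into $Z$ at some point; you might phrase it that way to avoid the reader pausing over what ``closest'' means here.
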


This characterization also implies a formula to determine the minimum
number of edges to be deleted to destroy all arborescences. The
characterization is based on double cuts.
\begin{definition}
For a digraph $D=(V,A)$, a \emph{double cut} $\delta
^{in}(Z_1)\cup\delta ^{in}(Z_2)$ is determined by a pair of non-empty
disjoint node subsets $Z_1,Z_2\subseteq V$. The minimum cardinality of
a double cut is denoted by $\mu(D)$, that is
\begin{equation}
\mu (D):= \min \{|\delta ^{in}(Z_1)|+|\delta ^{in}(Z_2)|: Z_1\cap Z_2=\emptyset \neq Z_1,Z_2\subsetneq V\}.
\end{equation}
\end{definition}


\begin{cor}\label{cor:noarb}
For any digraph $D=(V,A)$ the following
equation holds: $\gamma (D,\emptyset)=\mu (D)$.
\end{cor}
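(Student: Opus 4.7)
The plan is to prove both inequalities $\gamma(D,\emptyset)\le \mu(D)$ and $\gamma(D,\emptyset)\ge \mu(D)$ by a direct application of Lemma \ref{lem:arb}, after first noting that an $\emptyset$-tight arborescence is simply a spanning arborescence, so $\gamma(D,\emptyset)$ counts the minimum number of arcs to be deleted from $D$ so that no spanning arborescence remains.

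For the upper bound $\gamma(D,\emptyset)\le \mu(D)$, I would take a pair of disjoint non-empty proper subsets $Z_1,Z_2\subsetneq V$ attaining the minimum in the definition of $\mu(D)$, and set $H:=\delta^{in}(Z_1)\cup\delta^{in}(Z_2)$. Since $Z_1\cap Z_2=\emptyset$, no single arc can enter both $Z_1$ and $Z_2$ (its head lies in at most one of them), so $\delta^{in}(Z_1)$ and $\delta^{in}(Z_2)$ are disjoint and $|H|=|\delta^{in}(Z_1)|+|\delta^{in}(Z_2)|=\mu(D)$. In $D-H$ both $Z_1$ and $Z_2$ have in-degree zero, so by the second alternative of Lemma \ref{lem:arb} the first alternative must fail, i.e.\ $D-H$ contains no spanning arborescence; hence $\gamma(D,\emptyset)\le |H|=\mu(D)$.

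For the lower bound $\gamma(D,\emptyset)\ge \mu(D)$, I would take any optimal $H\subseteq A$ with $|H|=\gamma(D,\emptyset)$ and $D-H$ containing no spanning arborescence. By Lemma \ref{lem:arb}, applied to $D-H$, there exist disjoint non-empty proper subsets $Z_1,Z_2\subsetneq V$ with $\varrho_{D-H}(Z_1)=\varrho_{D-H}(Z_2)=0$, which means every arc of $D$ entering $Z_1$ or $Z_2$ already belongs to $H$. The same disjointness argument as before yields $|H|\ge |\delta^{in}(Z_1)|+|\delta^{in}(Z_2)|\ge \mu(D)$, completing the proof.

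There is essentially no obstacle here: the corollary is the algorithmic/quantitative restatement of Lemma \ref{lem:arb}, and both directions reduce to the observation that the in-arc sets of two disjoint subsets are themselves disjoint.
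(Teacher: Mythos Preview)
Your proof is correct and is exactly the argument the paper has in mind: the corollary is stated without proof precisely because it follows immediately from Lemma \ref{lem:arb} in the two-sided way you describe. There is nothing to add.
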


We point out that a minimum double cut can be found in polynomial time
by a simple reduction to minimum cut: this is described in Section \ref{sec:runtime}.  Furthermore we will need the
following observation (the proof is left to the reader).

\begin{lemma}\label{lem:roots}
Given a digraph $D=(V,A)$, let $R=\{r\in V:$ there exists an $r$-rooted
spanning arborescence in $D\}$. Then $D[R]$ is a strongly connected
digraph, and $\varrho_D(R)=0$.
\end{lemma}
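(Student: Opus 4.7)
The plan is to use the standard reachability characterization of arborescence roots: $r\in R$ if and only if every node of $V$ is reachable from $r$ by a directed path in $D$ (the ``only if'' direction is obvious, while for the ``if'' direction one just picks, for each $v\neq r$, the last arc on some $r$-to-$v$ path to obtain the in-arcs of an arborescence). Once this is in hand, both claims follow from short reachability arguments.

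First I would show that $\varrho_D(R)=0$. Suppose for contradiction that some arc $uv\in A$ enters $R$, i.e.\ $u\in V\setminus R$ and $v\in R$. Since $v\in R$, every node $w\in V$ is reachable from $v$ in $D$ by a directed path $P_w$. Then $uv$ followed by $P_w$ is a directed walk from $u$ to $w$, so every node is reachable from $u$ as well. By the characterization above this forces $u\in R$, contradicting $u\notin R$.

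Next I would show that $D[R]$ is strongly connected. Pick any $u,v\in R$. Because $u\in R$, there is a directed $u$-to-$v$ path $P=u=w_0,w_1,\ldots,w_k=v$ in $D$; it suffices to verify that each internal vertex $w_i$ lies in $R$, for then $P$ is actually a path in $D[R]$. Fix such a $w_i$ and any target $x\in V$. The suffix $w_i,w_{i+1},\ldots,w_k=v$ of $P$ witnesses that $v$ is reachable from $w_i$, and since $v\in R$, there is a directed path in $D$ from $v$ to $x$; concatenating the two yields a walk from $w_i$ to $x$. Hence every node of $V$ is reachable from $w_i$, so $w_i\in R$. As $u,v$ were arbitrary, $D[R]$ is strongly connected.

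There is no real obstacle here; the only thing one has to be careful about is not to confuse ``path in $D$'' with ``path in $D[R]$'' in the second part, which is precisely why the internal vertices of the witnessing path must be certified to lie in $R$ before one can conclude strong connectivity of the induced subgraph.
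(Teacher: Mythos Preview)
The paper explicitly leaves this proof to the reader, so there is no original argument to compare against. Your argument is correct and is the natural one.

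One minor quibble worth fixing: your parenthetical justification for the ``if'' direction of the reachability characterization---picking, for each $v\neq r$, the last arc on \emph{some} $r$-to-$v$ path---does not in general produce an arborescence, since independent choices for different $v$ can create a cycle (e.g.\ with nodes $r,a,b$ and arcs $ra,rb,ab,ba$, choosing the path $r\to b\to a$ for $a$ and $r\to a\to b$ for $b$ yields $\{ba,ab\}$). The characterization itself is of course true; the standard fix is to take a BFS/DFS tree from $r$, or equivalently to choose the paths consistently. This does not affect the rest of your proof, which only relies on the characterization, not on that parenthetical sketch.
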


\section{Covering  tight arborescences}\label{sec:covtight}

Given a laminar family $\cL\subseteq 2^V$ with $V\in \cL$, for $F\in
\cL$, let $\cL[F]:=\{F'\in\cL,F'\subseteq F\}$.  A simple corollary of
Claim \ref{cl:Ltight} is the following.


\begin{claimnum} \label{haf}
For any $\cL$-tight arborescence $B$, and any
$F\in\cL$, $B[F]$ is an $\cL[F]$-tight
arborescence in $D[F]$.
\end{claimnum}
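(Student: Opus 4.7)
The plan is to prove the claim by a double application of Claim \ref{cl:Ltight}, which states that $\cL$-tightness of an arborescence is equivalent to the property that $B[F']$ is an arborescence for every $F'$ in the family. This converts the somewhat awkward in-degree conditions in the definition of $\cL$-tightness into a more structural statement that behaves well under restriction.

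First I would use the forward direction of Claim \ref{cl:Ltight} applied to $B$ and $\cL$: since $B$ is $\cL$-tight, $B[F']$ is an arborescence for every $F' \in \cL$. In particular, taking $F' = F$, we see that $B[F]$ is an arborescence in $D[F]$, which already gives the "arborescence" half of what we need to prove.

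Next, for any $F' \in \cL[F]$, we have $F' \in \cL$ and $F' \subseteq F$. A simple observation about the restriction operation gives $(B[F])[F'] = B[F']$, and by the previous paragraph this is an arborescence. Thus, inside the digraph $D[F]$, the subgraph $B[F]$ satisfies the condition that $(B[F])[F']$ is an arborescence for every $F' \in \cL[F]$. Applying the reverse direction of Claim \ref{cl:Ltight} in $D[F]$ with the laminar family $\cL[F]$, we conclude that $B[F]$ is $\cL[F]$-tight in $D[F]$, as required.

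There is essentially no obstacle here; the proof is a purely formal unpacking of the characterization in Claim \ref{cl:Ltight} combined with the evident identity $(B[F])[F'] = B[F']$ for $F' \subseteq F$. The only point worth pausing on is to make sure that the second condition in the definition of $\cL$-tightness (the one about sets containing the root) is correctly transported; this is handled automatically by working through the equivalent formulation in Claim \ref{cl:Ltight} rather than the original definition, which is precisely the reason that reformulation is convenient.
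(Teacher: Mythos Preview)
Your proof is correct and matches the paper's approach exactly: the paper does not give a proof at all, simply stating that the claim is ``a simple corollary of Claim~\ref{cl:Ltight}'', and your argument is precisely the intended unpacking of that corollary via both directions of Claim~\ref{cl:Ltight} together with the identity $(B[F])[F']=B[F']$.
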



The following observation is crucial in our proofs.  Given a digraph
$D=(V,A)$ and a laminar family $\cL\subseteq 2^V$, for an arbitrary
member $F\in \cL$ and arc $a=xy\in A$ \emph{leaving} $F$, let $\tD$ be
the graph obtained from $D$ by changing the tail of $a$ for an
arbitrary other node $x'\in F$, that is $\tD=D-xy+x'y$ (where $x,x'\in
F$ and $y\notin F$). This operation will be called a
\textbf{tail-relocation}.  Clearly, there is a natural bijection
between the arcs of $D$ and those of $\tD$, but even more importantly,
this bijection also induces a bijection between the $\cL$-tight
arborescences in $D$ and those in $\tD$. This is formulated in the
following claim.
\begin{claimnum}\label{cl:tailreloc}
Let $B\subseteq A$  and $xy\in B$.
Then $B-xy+x'y$ is an \cL-tight arborescence in $\tD$ if and only if $B$ is an $\cL$-tight arborescence in $D$.
\end{claimnum}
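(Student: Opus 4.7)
I will prove the forward implication -- if $B$ is an $\cL$-tight arborescence in $D$ then $B':=B-xy+x'y$ is an $\cL$-tight arborescence in $\tD$ -- and obtain the converse by symmetry, since the inverse tail-relocation is $D=\tD-x'y+xy$, which moves the tail $x'\in F$ back to $x\in F$ along an arc of $\tD$ leaving $F$.

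The easy bookkeeping is this: since $xy$ and $x'y$ share the head $y$, every vertex has the same in-degree in $B$ and in $B'$, so $B'$ has $|V|-1$ arcs, every in-degree is at most one, and the same unique in-degree-zero vertex persists. Moreover, for every $F'\in\cL$ one has $|\delta^{in}_{B'}(F')|=|\delta^{in}_B(F')|$: if $y\notin F'$ neither arc enters $F'$, and if $y\in F'$ then $F'\not\subseteq F$ (because $y\notin F$), so laminarity of $\cL$ forces either $F'\cap F=\emptyset$ (whence $x,x'\notin F'$ and both arcs enter) or $F\subseteq F'$ (whence $x,x'\in F'$ and neither arc enters). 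Consequently all $\cL$-tightness inequalities carry over from $B$ to $B'$ the moment $B'$ is known to be an arborescence.

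The main obstacle is therefore to show that $B'$ is acyclic; combined with the in-degree bookkeeping above, this makes $B'$ a spanning arborescence on $|V|$ vertices, completing the forward direction. Suppose for contradiction that $B'$ contains a directed cycle. Because $B-xy\subseteq B$ is acyclic, the cycle must use the new arc $x'y$, yielding a directed $B$-path from $y$ to $x'$; in particular $x'$ would be a descendant of $y$ in $B$. I would rule this out using Claim~\ref{cl:Ltight}: $B[F]$ is an arborescence on $F$ whose root $w\in F$ is either the global root $r$ (when $r\in F$) or the head of the unique arc of $B$ entering $F$ (when $r\notin F$). Hence the $B$-path from $r$ down to any vertex of $F$ decomposes as a prefix reaching $w$ from outside $F$ followed by a suffix staying entirely inside $F$. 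Applied to $x\in F$ together with the arc $xy$, this shows that $y$ is a proper descendant of $w$ in $B$. Applied to $x'\in F$, every ancestor of $x'$ in $B$ either lies in $F$ or is a proper ancestor of $w$; since $y\notin F$ and $y\neq w$, the only way $y$ could be an ancestor of $x'$ is to be a proper ancestor of $w$, contradicting that $y$ is a proper descendant of $w$.
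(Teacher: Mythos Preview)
Your proof is correct. The paper does not actually supply a proof of this claim; it states it and moves on, treating the bijection between $\cL$-tight arborescences of $D$ and $\tD$ as evident. Your argument fills in precisely the details one would want: the in-degree bookkeeping and the laminarity case-split handle $\cL$-tightness cleanly, and the acyclicity step---using Claim~\ref{cl:Ltight} to argue that $y$ is a proper descendant of the root $w$ of $B[F]$ while any ancestor of $x'$ outside $F$ must be a proper ancestor of $w$---is a sound way to rule out the only possible cycle through $x'y$. The symmetry observation for the converse is also valid, since the inverse operation is again a tail-relocation within the same $F\in\cL$.
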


The claim also implies that $\gamma (D,\cL)=\gamma (\tD,\cL)$.

\subsection{A "min-min" formula}\label{sec:minmin}

Our approach to determine $\gamma(D,\cL)$ is broken down
into two steps. First, we prove a "min-min" formula, that is, we show
that a set $H$ that attains the minimum in \eqref{min1} is equal to a
special arc subset called an 
\emph{$\cL$-double-cut}. The second step will be the
construction of an algorithm to find a minimum cardinality
$\cL$-double-cut.

So what is this first step -- the min-min formula all about? It
expresses that in order to cover optimally the $\cL$-tight
arborescences we need to consider the problem of covering the $\cL[F]$-tight arborescences for every $F\in \cL$.


\begin{definition}
For a set $Z\subseteq V$, let $\cL_Z$ denote the family of sets in $\cL$ not disjoint from $Z$, that is, let
\begin{equation}
\cL_Z:=\{F\in \cL:F\cap Z\ne\emptyset\}.
\end{equation}
Then an \emph{$\cL$-cut} $M(Z)$ is defined as the set of arcs entering $Z$, but not leaving any set in $\cL_Z$, that is, let
\begin{equation}
M(Z):=M_{D,\cL}(Z):=
\delta _D^{in}(Z)-\bigcup _{F\in \cL_Z}\left( \delta ^{out} _D(F) \right) .
\end{equation}
\end{definition}
\begin{figure}[!ht]
\begin{center}
\includegraphics[width=8cm]{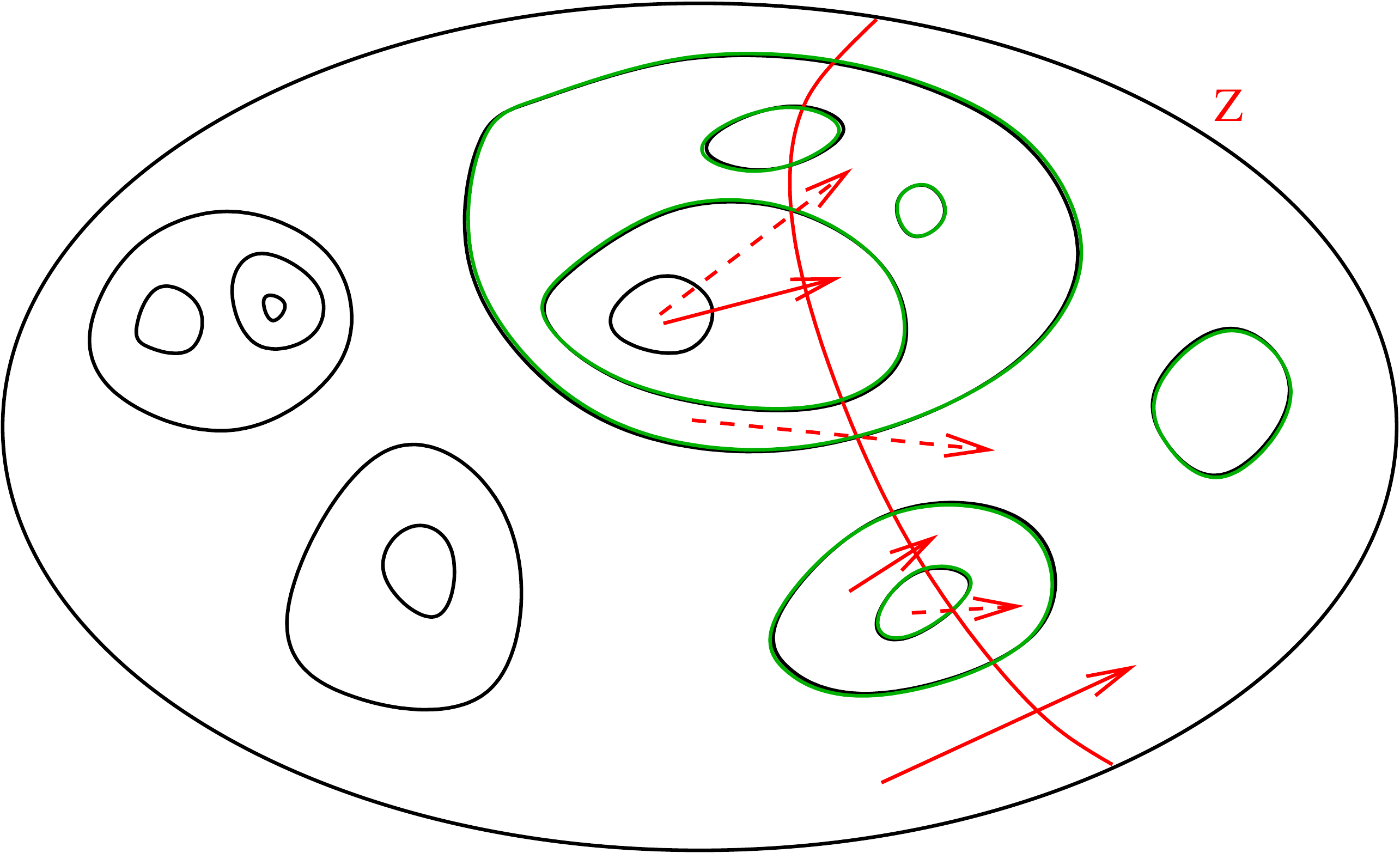}
\caption{Illustration of an $\cL$-cut. The arcs drawn with a solid line count in $M(Z)$, those with dashed line do not.}
\label{fig:lcut}
\end{center}
\end{figure}

Thus $M(Z)$ consists of those arcs
entering $Z$, but not leaving any of those sets in $\cL$ that have
non-empty intersection with $Z$. An illustration can be found in Figure \ref{fig:lcut}.  A set function $f$ is given by the
cardinality of an $\cL$-cut, that is, we define
\begin{equation}
f(Z):=f_D(Z):=f_{D,\cL}(Z):=|M_{D,\cL}(Z)|.
\end{equation}
It is useful to observe that 
\begin{equation}
f_{D,\cL}(Z)\ge f_{D[F],\cL[F]}(Z\cap F)\mbox{ for any }F\in \cL.
\end{equation}
The motivation for $f$ and $M(Z)$ is that $H=M(Z)$ is a set of arcs the deletion of which destroys all tight arborescences rooted outside of $Z$, as claimed by the following lemma.

\begin{lemma}\label{lem:norout}
Given $D=(V,A)$ and laminar family $\cL\subseteq 2^V$.  If $f_D(Z)=0$
for some non-empty $Z\subseteq V$ then there is no \cL-tight
arborescence in $D$ with root outside $Z$.
\end{lemma}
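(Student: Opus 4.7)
The plan is to argue by contradiction. I would assume $B$ is an $\cL$-tight arborescence of $D$ whose root $r$ lies outside $Z$, and introduce the auxiliary family
\[
T := \{F\in\cL : F\cap Z\neq\emptyset \text{ and the root } r_F \text{ of } B[F] \text{ lies outside } Z\};
\]
note that $B[F]$ is an arborescence by Claim \ref{cl:Ltight}, so $r_F$ is well-defined. Since $V\in\cL$, the set $V$ itself belongs to $T$, so $T$ is nonempty; the strategy is then to fix an inclusion-minimal $F\in T$ and produce some $F'\in T$ with $F'\subsetneq F$, contradicting minimality.

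To find such an $F'$, I would pick any $w\in F\cap Z$ and follow the $r_F\to w$ path $P$ in $B[F]$. Since $r_F\notin Z$ while $w\in Z$, the path $P$ contains a first arc $b=x'y'$ entering $Z$, so $x'\notin Z$, $y'\in Z$, and both endpoints lie in $F$. The hypothesis $f_D(Z)=0$ forces $b\in\delta^{out}(F')$ for some $F'\in\cL_Z$; then $x'\in F\cap F'$ makes $F$ and $F'$ comparable by laminarity, and the presence of $y'\in F\setminus F'$ rules out $F\subseteq F'$, leaving $F'\subsetneq F$ with $F'\cap Z\neq\emptyset$.

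The remaining task is to verify $r_{F'}\notin Z$, which would place $F'$ in $T$. I would let $v^*$ be the first vertex of $P$ inside $F'$; such a $v^*$ exists (e.g.\ $x'\in F'$), lies on $P$ no later than $x'$, and hence lies outside $Z$ by the choice of $b$. If $v^*=r_F$, then $r_F\in F'$, and a short check using $\cL$-tightness shows that $r_{F'}=r_F\notin Z$. Otherwise the predecessor of $v^*$ on $P$ lies outside $F'$, so the corresponding arc of $B$ enters $F'$ at $v^*$, and since $\cL$-tightness allows at most one arc of $B$ to enter $F'$, this arc is unique and $r_{F'}=v^*\notin Z$.

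The main obstacle will be this last identification of $r_{F'}$: it requires combining the laminar relationship between $F$ and $F'$ (forced by $b$ leaving $F'$ while staying inside $F$) with the \emph{``enters each member at most once''} clause of $\cL$-tightness, in order to convert the auxiliary arc $b$ into structural information about where the smaller sub-arborescence $B[F']$ is rooted. Once this is in hand, the rest is a routine minimality descent in the finite laminar family $\cL$.
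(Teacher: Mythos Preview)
Your proof is correct, but it takes a genuinely different route from the paper's argument. The paper exploits the \emph{tail-relocation} operation (Claim~\ref{cl:tailreloc}): for every arc $uv$ entering $Z$, the hypothesis $f_D(Z)=0$ guarantees an $F\in\cL_Z$ with $u\in F$, $v\notin F$, so one may relocate the tail of $uv$ into $F\cap Z$; after doing this for all arcs entering $Z$ one obtains a digraph $D'$ with $\varrho_{D'}(Z)=0$, whence no arborescence rooted outside $Z$ can exist in $D'$, and tail-relocation invariance transfers this conclusion back to $D$.

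Your argument avoids tail-relocation entirely and instead runs a direct minimality descent inside the laminar family: starting from $V\in T$, you locate along the $B[F]$-path a smaller $F'\in T$ and derive a contradiction. This is more self-contained (it uses only Claim~\ref{cl:Ltight} and the definition of $\cL$-tightness, not Claim~\ref{cl:tailreloc}), at the price of being somewhat longer. The paper's route, on the other hand, foreshadows the tail-relocation technique that becomes the main workhorse in Sections~\ref{sec:solid}--\ref{sec:algorithm}, so it has expository value beyond this single lemma.
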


\newcommand{\ov}{\overline} 

\begin{proof}
Take an arbitrary arc $uv$ entering $Z$. Since $uv\notin M(Z)$, there
exists an $u\ov v$ set $F\in \cL$ so that $F\cap Z\ne
\emptyset$. Relocate the tail of this arc into a node in $F\cap
Z$. Let $D'$ be the digraph obtained from $D$ after doing this
tail-relocation for every arc entering $Z$. By Claim
\ref{cl:tailreloc}, there exists an \cL-tight arborescence in $D$ with
root outside $Z$ if and only if  there exists an \cL-tight
arborescence in $D'$ with root outside $Z$. But $\varrho_{D'}(Z)=0$.
\end{proof}

 \newcommand{\bD}{\bar D}

For any $F\in \cL$ and nonempty disjoint subsets
$Z_1,Z_2\subseteq F$ the set of arcs in $M_{D[F], \cL[F]}(Z_1)\cup
M_{D[F], \cL[F]}(Z_2) $ will be called an \emph{$\cL$-double cut}, and
we introduce the following notation for the minimum cardinality of an
$\cL$-double cut:
\begin{eqnarray*}
\Theta_F:=\Theta_{F,D}:=\Theta_{F,D,\cL}:=
\min\{f_{D[F],
  \cL[F]}(Z_1)+f_{D[F], \cL[F]}(Z_2): \  \emptyset \ne Z_1,Z_2\subseteq
F, Z_1\cap Z_2=\emptyset\}.
\end{eqnarray*}

The following simple observation is worth mentioning.
\begin{claimnum}\label{cl:Thetamu}
Given a digraph $D=(V,A)$ and a laminar family $\cL\subseteq 2^V$,
then $f_{D,\cL}(Z)\le \varrho_D(Z)$ holds for every $Z\subseteq
V$. Consequently, $\Theta_{F,D,\cL}\le \mu(D[F])$ holds for any $F\in
\cL$.
\end{claimnum}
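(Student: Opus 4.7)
The plan is to simply unfold the definitions: both inequalities follow directly from the fact that $M(Z)$ is built by \emph{removing} arcs from $\delta^{in}(Z)$.

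First I would establish the inequality $f_{D,\cL}(Z)\le \varrho_D(Z)$. By the definition
\[
M_{D,\cL}(Z)=\delta^{in}_D(Z)-\bigcup_{F\in\cL_Z}\delta^{out}_D(F),
\]
$M_{D,\cL}(Z)$ is a subset of $\delta^{in}_D(Z)$, so $f_{D,\cL}(Z)=|M_{D,\cL}(Z)|\le|\delta^{in}_D(Z)|=\varrho_D(Z)$. This is the main content of the claim and requires nothing beyond the definitions.

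Next I would derive the second inequality by applying the first one inside the subdigraph $D[F]$ with the restricted laminar family $\cL[F]$. For any nonempty disjoint $Z_1,Z_2\subseteq F$, the first part yields
\[
f_{D[F],\cL[F]}(Z_i)\le \varrho_{D[F]}(Z_i)\qquad(i=1,2),
\]
and consequently $f_{D[F],\cL[F]}(Z_1)+f_{D[F],\cL[F]}(Z_2)\le \varrho_{D[F]}(Z_1)+\varrho_{D[F]}(Z_2)$. I would then observe that the pairs $(Z_1,Z_2)$ admissible in the definition of $\Theta_{F,D,\cL}$ coincide with those admissible in the definition of $\mu(D[F])$ applied to the digraph $D[F]$: disjoint nonempty $Z_1,Z_2\subseteq F$ are automatically proper subsets of $F$ since each must omit the other, which is nonempty. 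Taking the minimum on both sides over this common set of pairs then gives $\Theta_{F,D,\cL}\le \mu(D[F])$.

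There is no real obstacle here; the statement is purely bookkeeping about the definitions of $M(Z)$, $f$, $\Theta_F$, and $\mu$, and it is recorded to be invoked later when comparing $\cL$-double cuts with ordinary double cuts inside members of $\cL$.
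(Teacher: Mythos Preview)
Your proposal is correct and matches the paper's treatment: the paper states this claim as a ``simple observation'' without providing any proof, and your argument just makes explicit the one-line reasoning (that $M(Z)\subseteq\delta^{in}(Z)$ by definition, and then minimizing over the same pairs) that the paper leaves to the reader.
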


Note that the tail-relocation operation introduced above does not
change the $f$-value of any set $Z\subseteq V$, that is
$f_{D,\cL}(Z)=f_{D',\cL}(Z)$, if $D'$ is obtained from $D$ by (one or
several) tail-relocation. Consequently, this operation does not modify
the $\Theta$ value, either, that is
$\Theta_{F,D,\cL}=\Theta_{F,D',\cL}$ for any $F\in \cL$. The
following "min-min" theorem motivates the definition of $\Theta$.

\begin{theorem}\label{thm:minimin}For a digraph $D=(V,A)$, and a laminar family
$\cL$ of subsets of $V$, the minimum number of arcs to be deleted from
  $D$ to obtain a digraph that does not contain an $\cL$-tight
  arborescence is attained on an $\cL$-double cut, that is
\[\gamma (D,\cL)=\min_{F\in \cL}\Theta_{F,D,\cL}.\]
\end{theorem}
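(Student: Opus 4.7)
I would prove the two inequalities separately.

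For $\gamma(D,\cL)\le\min_F\Theta_{F,D,\cL}$: fix any $F\in\cL$ and disjoint nonempty $Z_1,Z_2\subseteq F$ achieving $\Theta_F$. Set $H=M_{D[F],\cL[F]}(Z_1)\cup M_{D[F],\cL[F]}(Z_2)$, so $|H|\le\Theta_F$. If some $\cL$-tight arborescence $B$ survived in $D-H$, then by Claim \ref{haf} $B[F]$ would be an $\cL[F]$-tight arborescence in $D[F]-H$; since $H$ entirely contains both $\cL[F]$-cuts $M_{D[F],\cL[F]}(Z_i)$, we would have $f_{D[F]-H,\cL[F]}(Z_i)=0$ for $i=1,2$, and Lemma \ref{lem:norout} would force the root of $B[F]$ to lie in both $Z_1$ and $Z_2$, contradicting disjointness.

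For $\gamma(D,\cL)\ge\min_F\Theta_{F,D,\cL}$, the plan is induction on $|\cL|$. The base $\cL=\{V\}$ is Corollary \ref{cor:noarb}: here $\cL$-tight arborescences are spanning arborescences and $f_{D,\{V\}}(Z)=\varrho_D(Z)$, so $\Theta_V=\mu(D)=\gamma(D,\{V\})$. For the inductive step, take an optimal cover $H$. If some $F\in\cL\setminus\{V\}$ has no $\cL[F]$-tight arborescence in $D[F]-H$, then $H\cap A(D[F])$ covers $\cL[F]$-tight arborescences in $D[F]$; since $|\cL[F]|<|\cL|$, the inductive hypothesis applied to $(D[F],\cL[F])$ (using $D[F][F']=D[F']$ and $\cL[F][F']=\cL[F']$ whenever $F'\subseteq F$) delivers an $\cL[F']$-double cut of cardinality at most $|H|$ for some $F'\in\cL[F]\subseteq\cL$, which is exactly what is needed.

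The essential remaining case is that $D[F]-H$ admits an $\cL[F]$-tight arborescence for every $F\in\cL\setminus\{V\}$, yet $D-H$ admits none. Here I would pick, for each maximal proper member $F_i\in\cL$ (child of $V$), a root $r_i\in F_i$ of an $\cL[F_i]$-tight arborescence in $D[F_i]-H$, tail-relocate every arc of $D$ leaving $F_i$ to have tail $r_i$ (which leaves $\gamma$, $\Theta$ and all $f$-values invariant by Claim \ref{cl:tailreloc}), and then form a contracted digraph $\bar D$ on vertex set $(V-\bigcup_i F_i)\cup\{r_1,\ldots,r_k\}$ by collapsing each $F_i$ to $r_i$. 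The absence of an $\cL$-tight arborescence in $D-H$ should translate into the absence of a spanning arborescence in a suitable subdigraph of $\bar D-H$; Lemma \ref{lem:arb} then yields disjoint nonempty $\bar Z_1,\bar Z_2$ with no arc entering them in that subdigraph, and lifting each $r_j\in\bar Z_i$ to $F_j$ produces candidate sets $Z_1,Z_2\subseteq V$ for the sought $\cL$-double cut.

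The main obstacle is that the naive contraction is not faithful. An external arc of $D-H$ entering some $F_j$ at a vertex $y\ne r_j$ cannot be used to extend the chosen $r_j$-rooted $\cL[F_j]$-tight arborescence, yet after lifting the arc still enters $Z_1\supseteq F_j$ and contributes to $f_D(Z_1)$ even though it lies outside $H$. The cleanest way around this should be an extension of Lemma \ref{lem:roots} to $\cL$-tight arborescences that characterizes the set $R_j\subseteq F_j$ of admissible roots via a zero-$\cL$-cut condition such as $f_{D[F_j]-H,\cL[F_j]}(R_j)=0$. Building $\bar D$ so that only arcs entering $F_j$ at vertices of $R_j$ are retained, and using the extended root lemma to account for the arcs that are discarded, should exactly balance the $f$-accounting and deliver $f_D(Z_1)+f_D(Z_2)\le|H|$, completing the induction.
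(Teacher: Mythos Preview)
Your easy direction ($\gamma \le \min_F \Theta_F$) is correct and matches the paper's argument via Lemma~\ref{lem:norout}.

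For the hard direction, the paper isolates the qualitative statement as Lemma~\ref{lem:minmin}: if no $\cL$-double cut in $D$ is empty, then $D$ has an $\cL$-tight arborescence. One then applies this to $D-H$: the failure of \eqref{eq:minmin} produces $F\in\cL$ and disjoint nonempty $Z_1,Z_2\subseteq F$ with $M_{(D-H)[F]}(Z_i)=\emptyset$, whence $M_{D[F]}(Z_1)\cup M_{D[F]}(Z_2)\subseteq H$ and $\Theta_F\le|H|$. Your induction on $|\cL|$, with Case~1 dispatched by the inductive hypothesis and Case~2 argued directly, is exactly this lemma unpacked: Case~2 is the inductive step of Lemma~\ref{lem:minmin} at the top level $F=V$. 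So the overall strategy is equivalent to the paper's, though the paper's two proofs of Lemma~\ref{lem:minmin} are organized rather differently (the first works bottom-up from a \emph{minimal} $F$ with a contraction of its root-set $R$; the second works with a maximal proper $F_0$ and an essential-arc argument forcing $\varrho(F_0)\le 1$).

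There is, however, a real gap in your Case~2 sketch. You propose to lift each $\bar v_j\in\bar Z_i$ to the full set $F_j$ and then ``account for'' the external arcs of $D-H$ landing in $F_j\setminus R_j$ via the extended root lemma $f_{(D-H)[F_j]}(R_j)=0$. But that lemma (which is Claim~\ref{cl:fR} in the paper) only controls arcs \emph{inside} $D[F_j]$ that enter $R_j$; it says nothing about an arc $uv$ with $u\notin F_j$ and $v\in F_j\setminus R_j$. When $Z_i\supseteq F_j$, such an arc genuinely belongs to $M_{D-H,\cL}(Z_i)$ (the largest $u\bar v$-set in $\cL$ is the child $F_l\ni u$, disjoint from $Z_i$), so $f_{D-H}(Z_i)>0$ and the accounting does not close.

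The fix is to lift to $R_j$ rather than to $F_j$: set $Z_i=(\bar Z_i\cap U)\cup\bigcup_{\bar v_j\in\bar Z_i}R_j$. Now consider any $uv\in M_{D-H,\cL}(Z_i)$. If $v\in R_j$ and $u\in F_j\setminus R_j$, then $uv$ is an arc of $(D-H)[F_j]$ entering $R_j$ and not leaving any member of $\cL[F_j]_{R_j}$ (the largest $u\bar v$-set in $\cL$ is a proper subset of $F_j$ disjoint from $Z_i$, hence from $R_j$), contradicting $f_{(D-H)[F_j]}(R_j)=0$. In every other case the image of $uv$ in your contracted graph $\bar D$ is retained (you only discarded arcs with head in some $F_j\setminus R_j$) and enters $\bar Z_i$, contradicting $\varrho_{\bar D}(\bar Z_i)=0$. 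Thus $f_{D-H}(Z_1)=f_{D-H}(Z_2)=0$, so $M_D(Z_1)\cup M_D(Z_2)\subseteq H$ and $\Theta_V\le|H|$. With this correction your plan goes through; the preliminary tail-relocation you describe is then unnecessary.
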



\begin{proof}
By Lemma \ref{lem:norout}, $\gamma (D,\cL)\le \min_{F\in
  \cL}\Theta_F$, since if we delete an arc set
$M_{D[F]}(Z_1)\cup M_{D[F]}(Z_2)$ for some $F\in \cL$ and
non-empty disjoint $Z_1,Z_2\subseteq F$, then no $\cL[F]$-tight
arborescence survives in $D[F]$ (since its root can neither be in
$F-Z_1$, nor in $F-Z_2$, by Lemma \ref{lem:norout}, and $F=(F-Z_1)\cup (F-Z_2)$).

The other inequality follows from Lemma \ref{lem:minmin} below.
\end{proof}

\begin{lemma}\label{lem:minmin}
Given a digraph $D=(V, A)$ and a laminar family $\cL\subseteq 2^V$
with $V\in \cL$, if
\begin{equation}\label{eq:minmin}
f_{D[F]}(Z_1)+f_{D[F]}(Z_2)\ge 1\mbox{ for any }F\in \cL \mbox{
  and non-empty disjoint sets }Z_1,Z_2\subseteq F
\end{equation}
then there exists a
$\cL$-tight arborescence in $D$.
\end{lemma}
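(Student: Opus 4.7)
My plan is to prove Lemma~\ref{lem:minmin} by induction on $|V|+|\cL|$. The base case $|V|=1$ is trivial: the unique vertex is itself an $\cL$-tight arborescence. For the inductive step, I first dispatch the sub-case $\cL=\{V\}$: combining the hypothesis with the trivial bound $f_D\le\varrho_D$ rules out two disjoint non-empty $Z_1,Z_2\subsetneq V$ both with $\varrho_D(\cdot)=0$, so Lemma~\ref{lem:arb} yields a spanning arborescence, which is automatically $\{V\}$-tight. Otherwise I pick a minimal $F\in\cL\setminus\{V\}$, so that $\cL[F]=\{F\}$ and $f_{D[F]}(Z)=\varrho_{D[F]}(Z)$ for every $Z\subseteq F$.

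If $|F|=1$, I verify that removing the singleton $F$ from $\cL$ leaves every $f$-value unchanged (arcs out of a singleton cannot enter any set containing it). The inductive hypothesis applied to $(D,\cL\setminus\{F\})$ then yields a $(\cL\setminus\{F\})$-tight arborescence, and this is automatically $\cL$-tight because the single-vertex constraint at $F$ is vacuous for any arborescence. If $|F|\ge 2$, Lemma~\ref{lem:arb} applied to $D[F]$ produces a spanning arborescence; letting $R_F\subseteq F$ denote the set of nodes that serve as the root of some spanning arborescence of $D[F]$, Lemma~\ref{lem:roots} gives $R_F\neq\emptyset$, $\varrho_{D[F]}(R_F)=0$, and $D[R_F]$ strongly connected. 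Because any $\cL$-tight arborescence of $D$ that enters $F$ must do so along an arc whose head is the root of $B[F]$, which belongs to $R_F$, the arcs from $V\setminus F$ into $F\setminus R_F$ are never used and I may delete them from $D$; call the resulting digraph $D'$. I then contract $F$ in $D'$ to a single node $\phi_F$, yielding $\hat D$ on $\hat V=(V\setminus F)\cup\{\phi_F\}$, with laminar family $\hat\cL$ obtained from $\cL\setminus\{F\}$ by replacing $F$ with $\phi_F$ in every set that contained $F$.

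The crux is to verify that $(D,\cL)$'s hypothesis implies $(\hat D,\hat\cL)$'s. For disjoint non-empty $\hat Z_1,\hat Z_2\subseteq\hat F'\in\hat\cL$ with $F'\in\cL$ the lift of $\hat F'$, I lift $\hat Z_i$ to $Z_i^*\subseteq F'$ by setting $Z_i^*=\hat Z_i$ when $\phi_F\notin\hat Z_i$ and $Z_i^*=(\hat Z_i\setminus\{\phi_F\})\cup R_F$ otherwise. Using the minimality of $F$ in $\cL$ (no set of $\cL$ is strictly inside $F$) together with $\varrho_{D[F]}(R_F)=0$ (which rules out arcs of $D[F]$ from $F\setminus R_F$ to $R_F$), a case analysis shows $f_{\hat D[\hat F']}(\hat Z_i)=f_{D[F']}(Z_i^*)$: arcs of $D'$ that leave $F$ are excluded on both sides (by $F\in\cL[F']_{Z_i^*}$ in $D'$ and by becoming arcs internal to $\hat Z_i$ in $\hat D$), while remaining arcs correspond bijectively under contraction. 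Hence the hypothesis for $(D,\cL)$ applied to $Z_1^*,Z_2^*$ yields $f_{\hat D[\hat F']}(\hat Z_1)+f_{\hat D[\hat F']}(\hat Z_2)\ge 1$. The inductive hypothesis then produces a $\hat\cL$-tight arborescence $\hat B$ of $\hat D$, which I lift to $D$ by tail-relocating (Claim~\ref{cl:tailreloc}) the arcs of $\hat B$ leaving $\phi_F$ so they share a common tail $r^*\in R_F$, by lifting the at most one in-arc of $\phi_F$ to its $D'$-representative (automatically ending in $R_F$), and by combining with a spanning arborescence of $D[F]$ rooted at that head (or at $r^*$ if $\phi_F$ is the root of $\hat B$). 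By Claim~\ref{cl:Ltight} this is an $\cL$-tight arborescence of $D$. The main obstacle is the $f$-value correspondence: the delicate bijection crucially requires $\varrho_{D[F]}(R_F)=0$ to eliminate the problematic arcs of $D[F]$ from $F\setminus R_F$ to $R_F$, and the minimality of $F$ in $\cL$ to ensure that $\cL[F']_{Z_i^*}\setminus\{F\}$ corresponds exactly to $\hat\cL[\hat F']_{\hat Z_i}$.
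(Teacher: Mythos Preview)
Your argument is correct and is closest in spirit to the paper's first proof of Lemma~\ref{lem:minmin}: both pick an inclusion-wise minimal $F\in\cL$, invoke Lemma~\ref{lem:arb} inside $F$, and exploit the root set $R_F$ together with Lemma~\ref{lem:roots}. The packaging, however, is genuinely different. The paper introduces an auxiliary universal root $\bar r$ and then splits into three cases (contract $R$ when $|R|\ge 2$; otherwise delete a superfluous arc entering $F$; otherwise drop $F$ from $\cL$). You avoid both the auxiliary root and the case split by uniformly deleting all in-arcs of $F$ that miss $R_F$ and contracting the \emph{whole} of $F$. The price you pay is a more delicate verification of the $f$-correspondence, and your lift $Z_i^{*}=(\hat Z_i\setminus\{\phi_F\})\cup R_F$ (rather than $\cup F$) is exactly the right choice: it is the analogue, stated as an equality, of the paper's Case~2 inequality $f(Z_1-(F-r))\le f(Z_1)$, and your use of $\varrho_{D[F]}(R_F)=0$ to kill the internal arcs from $F\setminus R_F$ to $R_F$ is the same mechanism as there. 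What you gain is a single uniform induction step on $|V|+|\cL|$.

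One small quibble: the appeal to Claim~\ref{cl:tailreloc} in your lifting step is unnecessary and, as phrased, slightly off---relocating tails to a common $r^{*}$ produces arcs that need not lie in $A(D)$. The clean statement is simply: take the unique $D$-preimage of every arc of $\hat B$ and add a spanning arborescence of $D[F]$ rooted at the head $w\in R_F$ of the (at most one) lifted in-arc of $F$, or at any $r^{*}\in R_F$ if $\phi_F$ is the root. A direct check via Claim~\ref{cl:Ltight} then shows this is $\cL$-tight, with no relocation needed. (Alternatively, you may invoke the $f$-invariance under tail-relocation to assume up front that all arcs leaving $F$ already share a tail in $R_F$; but then say so explicitly.)
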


\begin{proof} Let $\bD=(V+\bar{r}, A\cup \{\bar{r}v: v\in V\})$. Notice that $\cL$-tight arborescences in $D$ and \cL-tight 
arborescences in $\bD$ naturally correspond to each other, since
$V\in\cL$.  Thus we will show that there exists an \cL-tight
arborescence in $\bD$ (notice that the condition holds for $\bD$,
since $\bD[F]=D[F]$ for every $F\in \cL$).  We will use induction on $|\cL|+|V|+|A(\bD)|$. If
$\cL=\{V\}$ then the lemma is true by Lemma \ref{lem:arb}. Otherwise
let $F\in \cL$ be an inclusionwise minimal member of $\cL$: again by
Lemma \ref{lem:arb}, there exists a spanning arborescence in
$\bD[F]$. Let $R$ be the subset of nodes of $F$ that can be the root
of a spanning arborescence in $\bD[F]$, i.e. $R=\{r\in F: $ there
exists an $r$-rooted arborescence (spanning $F$) in $\bD[F]\}$.
\begin{enumerate}
\item Assume first that $|R|\ge 2$ and let $\bD_1=\bD/R$ obtained by
  contracting $R$.  For any set $Z\subseteq V$ which is either
  disjoint form $R$, or contains $R$, let $Z/R$ be its (well-defined)
  image after the contraction and let $\cL_1=\{X/R: X\in \cL\}$.  By
  induction, there exists an $\cL_1$-tight arborescence $P$ in $\bD_1$,
  since $f_{\bD[X/R]}(Z/R)=f_{\bD[X]}(Z)$ for any $X/R\in \cL_1$ and
  $Z/R\subseteq X/R$. It is clear that we can create an $\cL$-tight
  arborescence in $\bD$ from $P$: we describe one possible
  way. Consider the unique arc in $P$ that enters $F$ and assume that
  the pre-image of this arc has head $r\in R$. Delete every arc from
  $P$ induced by $F/R$ and substitute them with an arbitrary
  $r$-rooted arborescence (spanning $F$) of $\bD[F]$. This clearly
  gives an $\cL$-tight arborescence.

\item So we can assume that $R=\{r\}$. Next assume that there exists
  an arc  $uv\in A(\bD)$ entering $F$ with $r\ne v$. Let
  $\bD_2=\bD-uv$: we claim that there exists an $\cL$-tight arborescence
  in $\bD_2$ (which is clearly an $\cL $-tight arborescence in $\bD$,
  too). If this does not hold then by the induction there must exist a
  set $F'\in \cL$ and non-empty disjoint subsets $Z_1,Z_2\subseteq F'$
  with $\sum_{i=1,2}f_{\bD_2[F']}(Z_i)=0$. Since
  $\sum_{i=1,2}f_{\bD[F']}(Z_i)>0$, the arc $uv$ must be equal to
  (say) $M_{\bD[F'],\cL}(Z_1)$ (while
  $M_{\bD[F'],\cL}(Z_2)=\emptyset$). This implies that $uv$ enters
  $Z_1$, while $r\in Z_1$ must also hold, otherwise
  $f_{\bD[F']}(Z_1)\ge 2$ would hold, since $v$ is reachable from $r$
  in $\bD[F']$. Let $Z_1'=Z_1-(F-r)$ and observe that
  $f_{\bD[F']}(Z_1')=0$: this is because the arcs in
  $\delta^{in}_{\bD[F']}(Z_1')-\delta^{in}_{\bD[F']}(Z_1)$ all leave
  $F$, since $\varrho_{\bD[F]}(r)=0$ by Lemma \ref{lem:roots}. Thus
  $f_{\bD[F']}(Z_1')+f_{\bD[F']}(Z_2)=0$, a contradiction.
\item Thus we can also assume that the arcs of $\bD$ entering $F$ all
  enter $r$. Let $\cL_2=\cL-\{F\}$: then clearly
  $f_{\bD[F'],\cL_2}(Z)\ge f_{\bD[F'],\cL}(Z)$ for any $F'\in \cL_2$
  and $Z\subseteq F'$, so by induction there exists an $\cL_2$-tight
  arborescence $P_2$ in $\bD$. Since the root of $P_2$ is $\bar{r}$ and $P_2$ enters $F$ only once (since all arcs entering $F$ have head $r$), $P_2$  is also $\cL$-tight,
  so the theorem is proved.
\end{enumerate}
\end{proof}

\subsection{A second proof of the ``min-min'' theorem}\label{sec:reprove}

In this subsection we reprove Lemma \ref{lem:minmin} which is the hard
direction of Theorem \ref{thm:minimin}. Our motivation is that we
think we can give  better insights into the details of our
techniques.

Let $\cL_0=\{\{v\}: v\in V\}\cup \{V\}$. Notice that an arborescence
is \cL-tight if and only if it is $(\cL\cup \cL_0)$-tight (and
$\cL\cup \cL_0$ is also laminar, if \cL\ is laminar). Therefore, in
Section \ref{sec:reprove} we will assume that $\cL_0\subseteq \cL$.
In what follows we abbreviate $f_{D}$ by $f$ (and $f_{D[F], \cL[F]}$
is abbreviated by $f_{D[F]}$ for an $F\in \cL$). For nodes $s,t\in V$,
a subset $S$ with $s\in S\subseteq V-t$ is called an $s\ov t$-set.

\begin{claimnum}\label{cl:stinM}
Given a set $Z\subseteq V$, an arc $st$ is in $M(Z)$ if and only if it
enters $Z$ and 
the largest $s\ov t$ set in $ \cL$ is disjoint from $Z$. 
\end{claimnum}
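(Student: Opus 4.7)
The plan is to unpack $M(Z)$ straight from its definition and then exploit the laminarity of $\cL$. By definition, $st\in M(Z)$ exactly when $st\in\delta^{in}_D(Z)$ and $st\notin\delta^{out}_D(F)$ for every $F\in\cL_Z$. The first condition is precisely ``$st$ enters $Z$''. For the second, observe that $st$ leaves $F$ iff $s\in F$ and $t\notin F$, that is, iff $F$ is an $s\ov t$-set. Hence the two conditions together say: $st$ enters $Z$ and no $s\ov t$-set of $\cL$ meets $Z$.

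It then remains to show that ``no $s\ov t$-set of $\cL$ meets $Z$'' is equivalent to ``the largest $s\ov t$-set of $\cL$ misses $Z$''. The forward direction is immediate, since the largest such set is itself an $s\ov t$-set in $\cL$. For the reverse, I would use that any two $s\ov t$-sets in $\cL$ both contain $s$ and so intersect; by laminarity they are therefore comparable, and so the $s\ov t$-sets of $\cL$ form a chain under inclusion, each contained in the (unique) maximal one $F^\ast$. Thus $F^\ast\cap Z=\emptyset$ forces $F\cap Z\subseteq F^\ast\cap Z=\emptyset$ for every $s\ov t$-set $F\in\cL$. The existence of $F^\ast$ is automatic under the convention of Section~\ref{sec:reprove}, because $\{s\}\in\cL_0\subseteq\cL$ is always an $s\ov t$-set.

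There is no real obstacle here: the argument is a careful unrolling of the definition of $M(Z)$ plus the standard observation that the members of a laminar family that contain a fixed element form a chain. Combining the two equivalences then yields the claim.
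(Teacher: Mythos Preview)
Your argument is correct. The paper does not actually give a proof of this claim; it states it and only adds the remark that the largest $s\ov t$ set in $\cL$ is well defined because $\cL_0\subseteq\cL$. Your write-up makes explicit precisely the reasoning the paper leaves to the reader: unfold the definition of $M(Z)$, note that $st\in\delta^{out}(F)$ means $F$ is an $s\ov t$-set, and use laminarity to see that the $s\ov t$-sets in $\cL$ form a chain with maximum $F^\ast$, so that ``no $s\ov t$-set of $\cL$ meets $Z$'' is equivalent to ``$F^\ast\cap Z=\emptyset$''.
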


Notice that the largest $s\ov t$ set in $ \cL$ is well defined for
every pair $s, t\in V$, since $\cL_0\subseteq \cL$; in fact this
motivates the introduction of $\cL_0$.  


\begin{claimnum}\label{cl:Mcup}
For sets $X,Y\subseteq V$ we have $M(X\cup Y)\subseteq M(X)\cup
M(Y)$. Consequently,  $f(X)=f(Y)=0$ implies $f(X\cup Y)=0$.
\end{claimnum}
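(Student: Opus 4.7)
The plan is to leverage Claim \ref{cl:stinM}, which gives a clean per-arc characterization of membership in $M(Z)$: an arc $st$ belongs to $M(Z)$ iff it enters $Z$ and the (unique) largest $s\bar{t}$-set in $\cL$, call it $F_{s,t}$, is disjoint from $Z$. With this reformulation in hand, the containment $M(X\cup Y)\subseteq M(X)\cup M(Y)$ reduces to a short case analysis on where the head of the arc lies.

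Concretely, I would fix an arbitrary arc $st\in M(X\cup Y)$. By Claim \ref{cl:stinM}, this means $t\in X\cup Y$, $s\notin X\cup Y$, and $F_{s,t}\cap(X\cup Y)=\emptyset$. Split into two symmetric cases according to whether $t\in X$ or $t\in Y$. In the first case, the disjointness $F_{s,t}\cap(X\cup Y)=\emptyset$ in particular gives $F_{s,t}\cap X=\emptyset$, while $s\notin X\cup Y$ forces $s\notin X$; hence $st$ enters $X$ and its associated largest set $F_{s,t}$ avoids $X$, so by Claim \ref{cl:stinM} again, $st\in M(X)$. The case $t\in Y$ is entirely analogous and yields $st\in M(Y)$. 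Either way, $st\in M(X)\cup M(Y)$, finishing the inclusion. The ``consequently'' part is then immediate: since $f(Z)=|M(Z)|$, the hypothesis $f(X)=f(Y)=0$ forces $M(X)=M(Y)=\emptyset$, and combined with the inclusion just established, $M(X\cup Y)=\emptyset$, i.e.\ $f(X\cup Y)=0$.

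I do not anticipate any real obstacle here: once Claim \ref{cl:stinM} is available the argument is a one-line check. The only subtlety worth noting is that Claim \ref{cl:stinM} relies on the assumption $\cL_0\subseteq \cL$ adopted at the start of Section \ref{sec:reprove}, so that the largest $s\bar{t}$-set $F_{s,t}\in \cL$ is guaranteed to exist for every ordered pair $(s,t)$ with $s\ne t$. Without that assumption, one would need to unfold the original definition of $M(Z)$ directly and argue that any $F\in \cL_Z$ witnessing $st\notin M(Z)$ (i.e.\ any $F$ leaving which $st$ leaves and intersecting $Z$) also witnesses $st\notin M(X)$ or $st\notin M(Y)$, but under the convention $\cL_0\subseteq \cL$ this step is automatic and the proof stays as short as above.
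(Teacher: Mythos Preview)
Your proof is correct and follows essentially the same idea as the paper's: an arc entering $X\cup Y$ must enter one of $X$ or $Y$, and the ``no forbidden $F$'' condition carries over to the smaller set. The only cosmetic difference is that you route the argument through Claim \ref{cl:stinM} (requiring $\cL_0\subseteq\cL$), whereas the paper works directly from the definition of $M(Z)$, observing that $\cL_X\subseteq\cL_{X\cup Y}$ so any $F\in\cL_X$ that the arc left would already have witnessed $a\notin M(X\cup Y)$.
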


\begin{proof}
Consider an arc  $a\in M(X\cup Y)$: this must enter at least one of $X$
and $Y$, say it enters $X$. We claim that $a\in M(X)$, too: it cannot leave some $F\in \cL_X$,
since $\cL_X\subseteq \cL_{X\cup Y}$.
\end{proof}

\begin{claimnum}\label{cl:Mcap}
If  \eqref{eq:minmin} holds and $f(X)=f(Y)=0$ for some $X,Y\subseteq V$ 
then $f(X\cap Y)=0$, too.
  (Note that this does not necessarily hold without
  \eqref{eq:minmin}.)
\end{claimnum}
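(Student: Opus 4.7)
The plan is to argue by contradiction: assuming $f(X\cap Y)>0$ in addition to $f(X)=f(Y)=0$, I aim to exhibit a member $F\in\cL$ and disjoint nonempty sets $Z_1,Z_2\subseteq F$ with $f_{D[F]}(Z_1)+f_{D[F]}(Z_2)=0$, in direct conflict with~\eqref{eq:minmin}.

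First I would pick any arc $a=st$ in $M(X\cap Y)$---it exists because $f(X\cap Y)>0$---and take $F$ to be the largest $s\ov t$-set in $\cL$, which lies in $\cL$ by construction (the assumption $\cL_0\subseteq\cL$ is what guarantees that such a largest set is available). Claim~\ref{cl:stinM} tells me that $F\cap(X\cap Y)=\emptyset$, so the natural candidates $Z_1:=F\cap X$ and $Z_2:=F\cap Y$ are automatically disjoint subsets of $F$. I would then check non-emptiness via a short case split on the location of $s$: if $s\in X$, then $s\in F\cap X$; otherwise $s\notin X$ but $t\in X$, so $a$ enters $X$, and since $M(X)=\emptyset$, Claim~\ref{cl:stinM} forces $F\cap X\ne\emptyset$. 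The symmetric argument using $f(Y)=0$ shows $Z_2\ne\emptyset$. Finally, the monotonicity $f_{D,\cL}(Z)\ge f_{D[F],\cL[F]}(Z\cap F)$ noted just after the definition of $f$, applied with $Z=X$ and $Z=Y$, immediately yields $f_{D[F]}(Z_1)=f_{D[F]}(Z_2)=0$, delivering the promised contradiction.

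The only conceptual step---and what I expect to be the one ``aha'' point of the proof---is the choice of $F$: recognising that the very $\cL$-set which witnesses $a\in M(X\cap Y)$ is simultaneously the set inside which the two witness subsets for~\eqref{eq:minmin} can be found. Laminarity of $\cL$ plays no explicit role in the argument; what is used is only that a largest $s\ov t$-set is available in $\cL$. Consistent with the parenthetical remark in the claim, the hypothesis~\eqref{eq:minmin} is consumed exactly once, at the very last step, so no sharper property of $f$ is needed beyond what Section~\ref{sec:reprove} has already established.
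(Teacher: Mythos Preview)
Your argument is correct and follows essentially the same route as the paper's own proof: pick an arc in $M(X\cap Y)$, let $F$ be the largest $s\ov t$-set in $\cL$, observe $F\cap X\cap Y=\emptyset$ while $F\cap X$ and $F\cap Y$ are both nonempty, and invoke the monotonicity of $f$ to contradict~\eqref{eq:minmin}. Your case split for non-emptiness (on whether $s\in X$) is in fact a welcome elaboration of a step the paper leaves implicit.
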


\begin{proof}
Assume $f(X\cap Y)>0$ and consider an
arc  $uv\in M(X\cap Y)$. Let $F\in \cL$ be the largest $u\ov v$ set
in \cL. Since $uv\in M(X\cap Y)$, $F\cap (X\cap Y)=\emptyset$, but
since $f(X)=f(Y)=0$, $F\cap X\ne\emptyset\ne F\cap Y$, that together with 
$f_{D[F]}(X\cap F)=f_{D[F]}(Y\cap F)=0$ contradicts \eqref{eq:minmin}.
\end{proof}

The following statement  extends parts of  Lemma \ref{lem:roots}.

\begin{claimnum}\label{cl:fR}
Given a digraph $D=(V,A)$ and laminar family $\cL\subseteq 2^V$, let
$R=\{r\in V:$ there exists   a $\cL$-tight arborescence in
$D$ with root $r\}$. Then $f(R)=0.$
\end{claimnum}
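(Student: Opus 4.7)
The plan is to argue by contradiction: suppose $f(R)>0$, so there exists an arc $uv\in M(R)$. By Claim \ref{cl:stinM}, $uv$ enters $R$ (so $v\in R$ and $u\notin R$), and the largest $u\ov v$ set $F\in \cL$ (well-defined since $\cL_0\subseteq \cL$) satisfies $F\cap R=\emptyset$; note that $u\in F$ and $v\notin F$. My aim is to exhibit an $\cL$-tight arborescence rooted at some node of $F$, which will contradict $F\cap R=\emptyset$.

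Since $v\in R$, I pick an $\cL$-tight $v$-arborescence $B$. By Claim \ref{haf}, $B[F]$ is an $\cL[F]$-tight spanning arborescence of $D[F]$; let $r^*\in F$ be its root. Since $v\notin F$ and $B$ is $\cL$-tight, the unique arc of $B$ entering $F$ has head $r^*$, say $e=yr^*$ with $y\notin F$. I propose the candidate $B':=B-e+uv$. Because $B[F]$ is an $r^*$-arborescence on $F$, the node $u$ lies in the descendant subtree of $r^*$ in $B$; removing $e$ splits $B$ into this descendant subtree (containing $u$) and the complementary piece (containing the old root $v$), and adding $uv$ reconnects them; a quick in-degree check shows $B'$ is an $r^*$-arborescence.

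The main obstacle, and the bulk of the proof, is to verify that $B'$ is $\cL$-tight, i.e.\ $\varrho_{B'}(F')\le 1$ for every $F'\in \cL$, with equality forbidden when $r^*\in F'$. My plan is a short case analysis driven by the positions of $r^*$ and $v$ relative to $F'$, leveraging laminarity of \cL\ together with the maximality of $F$. If $r^*,v\in F'$, then $r^*\in F\cap F'$ and $v\in F'\setminus F$ force $F\subseteq F'$ by laminarity, so $u\in F'$ and $uv$ does not enter $F'$; combined with $\varrho_B(F')=0$ (since the root $v\in F'$) this gives $\varrho_{B'}(F')=0$. If $r^*\in F'$ and $v\notin F'$, then the maximality of $F$ excludes $F\subsetneq F'$, so $F'\subseteq F$; the $\cL[F]$-tightness of $B[F]$ gives $\varrho_{B[F]}(F')=0$, hence the unique arc of $B$ entering $F'$ is $e$ itself, and removing it (while $uv$ does not enter $F'$) yields $\varrho_{B'}(F')=0$. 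Finally, when $r^*\notin F'$, the arc $e$ does not enter $F'$: if $v\in F'$ then $\varrho_B(F')=0$ and thus $\varrho_{B'}(F')\le 1$, while if $v\notin F'$ then $uv$ does not enter $F'$ and $\varrho_{B'}(F')=\varrho_B(F')\le 1$. Once this verification is complete, $B'$ is an $\cL$-tight arborescence rooted at $r^*\in F$, so $r^*\in R\cap F$, contradicting $F\cap R=\emptyset$.
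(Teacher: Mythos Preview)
Your proof is correct and follows essentially the same route as the paper: both pick $uv\in M(R)$, take an $\cL$-tight $v$-arborescence $B$, remove the unique arc of $B$ entering the maximal $u\ov v$ set $F$, and add $uv$ to obtain a new arborescence rooted inside $F$. The paper simply asserts that the resulting $B'$ is $\cL$-tight and leaves the verification to the reader; your case analysis (based on whether $r^*$ and $v$ lie in $F'$, and using laminarity together with the maximality of $F$) is exactly the kind of check the paper omits, and it is carried out correctly.
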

\begin{proof}
Assume $f(R)>0$ and let $uv\in M(R)$. Let $F$ be the largest $u\ov v$
set in \cL. We have $F\cap R=\emptyset$. Let $B\subseteq A$ be an
\cL-tight arborescence of root $v$, and let $xy\in B$ be the (unique)
arc entering $F$. Consider $B'=B+uv-xy$: this is an arborescence of
root $y$, since every node has in-degree 1, and $B'$ is a spanning
tree in the undirected sense. We claim that $B'$ is also $\cL$-tight
which contradicts the definition of $R$.
\end{proof}

\begin{proof}[Proof of Lemma \ref{lem:minmin}]
The proof is by induction on $|\cL|+|A|$. The base case $\cL=\cL_0$ is
clear. Assume $\cL\supsetneq \cL_0$ and let $F_0\in \cL$ be an inclusionwise maximal
member of $\cL-\cL_0$.  An arc in $uv\in \delta^{in}(F_0)$ is called \textbf{erasable}
if \eqref{eq:minmin} holds for \cL\ and $D-uv$, otherwise $uv$ is called
\textbf{essential}. If there exists an erasable arc then
we are done by induction.  So we can assume that every arc entering $F_0$
is essential.  We will prove that the number of arcs entering $F_0$ is at
most one.  In order to prove this, let us first investigate when an arc
is essential.  Let $R=\{r\in F_0: r$ can be the root of an $\cL[F_0]$-tight
arborescence in $D[F_0]\}$. By the induction hypothesis, $R\ne
\emptyset$.  An arc $uv$ entering $F_0$ is essential if and only if
there exist non-empty disjoint $Z_1,Z_2\subseteq V$ with
$M(Z_1)=\{uv\}$ and $M(Z_2)=\emptyset$ (here we used that $F_0$ is a maximal member of $\cL$): assume that this is the case.
Observe that $f_{D[F_0]}(Z_1\cap F_0)=0$, implying that
\begin{enumerate}
\item $F_0\cap Z_2=\emptyset$, otherwise $f_{D[F_0]}(Z_1\cap
  F_0)=f_{D[F_0]}(Z_2\cap F_0)=0$ would contradict \eqref{eq:minmin}, and
\item $R\subseteq Z_1$, by the definition of $R$ and Lemma \ref{lem:norout}.
\end{enumerate}
Next we prove that $f(Z_1- (F_0-R))\le f(Z_1)$. Let $\hat{Z_1}=Z_1-
(F_0-R)$: in fact we show $M(\hat{Z_1})\subseteq M(Z_1)$. Consider an arc
$st\in M(\hat{Z_1})$. If $s\notin F_0$ then the largest $s\ov t$ set
in $\cL$ must be disjoint from $F_0$, therefore $st\in M({Z_1})$,
too. So assume $s\in F_0$. Then $t$ cannot be in $F_0$ (that is, in
$R$), since that would imply $f_{D[F_0]}(R)>0$, contradicting Claim
\ref{cl:fR}. Therefore $t\notin F_0$, that is, $F_0$ is an $s\ov t$
member of \cL, contradicting $st\in M(\hat{Z_1})$ by Claim
\ref{cl:stinM} (note that $F_0\cap \hat{Z_1}$ equals the nonempty
set $R$).

The previous observation implies that $v\in R$ must hold
(otherwise $f(\hat{Z_1})+f(Z_2)=0$). So we can assume that $Z_1\cap
F_0=R$ (we can substitute $Z_1$ by $\hat{Z_1}$).  

Assume that there is
another arc $u'v'$ entering $F_0$. By our assumption, this arc is essential,
proven by the existence of non-empty disjoint $Z'_1,Z'_2\subseteq V$
with $M(Z'_1)=\{u'v'\}$ and $M(Z'_2)=\emptyset$. Notice that $Z_2\cap
Z'_2\ne \emptyset$ by \eqref{eq:minmin}. We can again assume that
$Z_1'\cap F_0=R$ and $v'\in R$. By Claim \ref{cl:Mcup}, $M(Z_1\cup
Z'_1)\subseteq \{uv,u'v'\}$. However, $M(Z_1\cup Z'_1)=\emptyset$, for
example $uv\notin M(Z_1\cup Z'_1)$, since $uv$ enters $Z_1\cap Z_1'$
and $uv\notin M(Z_1')$, either $uv$ does not enter $Z_1'$, or if it
enters, then the largest $u\ov v$ set in \cL\ intersects $Z_1'$. We
have a contradiction with \eqref{eq:minmin} having $f(Z_1\cup
Z_1')=f(Z_2\cap Z_2')=0$.

We have proved that $\varrho_D(F_0)\le 1$.  Let $\cL'=\cL-\{F_0\}$. The
conditions in \eqref{eq:minmin} clearly hold for $\cL'$ (since
$f_{D[F],\cL'}(Z)\ge f_{D[F],\cL}(Z)$ for every $F\in \cL'$
and $Z\subseteq F$), so by induction there exists an $\cL'$-tight
arborescence $B\subseteq A$. If $\varrho_D(F_0)=0$ then $B$ is also
\cL-tight. On the other hand, if $uv\in A$ enters $F_0$, then $uv$ was
essential (proven by non-empty disjoint $Z_1,Z_2\subseteq V$ with
$M(Z_1)=\{uv\}$ and $M(Z_2)=\emptyset$), but then $Z_2\cap
F_0=\emptyset$, as we observed earlier, implying (by Lemma
\ref{lem:norout}) that the root of $B$ can not be in $F_0$, so $B$ is
also \cL-tight, what was to be proven.
\end{proof}


\subsection{In-solid sets and anchor nodes}\label{sec:solid}

Theorem \ref{thm:minimin} suggests a strategy for solving Problem
\ref{prob:1}: for every $F\in \cL$ we find a minimum
$\cL[F]$-double-cut in $D[F]$ and output the best of these. For
finding the minimum $\cL[F]$-double-cut, tail relocation can help: if
$ \Theta_F=f_{D[F], \cL[F]}(Z_1)+f_{D[F], \cL[F]}(Z_2)$ for some
nonempty, disjoint pair $Z_1, Z_2\subseteq F$, then suitably
relocating the tails of all arcs that leave some $F'\in \cL[F]_{Z_i}$
to a node in $F'\cap Z_i$ we do not change $\Theta_F$ and
$f_{D[F]}(Z_i)$ becomes $\varrho_{D'[F]}(Z_i)$, thus the problem boils
down to finding a minimum double cut in $D'[F]$ (where $D'$ is obtained
from $D$ after some suitable tail-relocations). But we can not try
every possible tail-relocations, there are too many of those, we need
something more here.  These extra ideas are introduced in this
section.

\begin{definition}
A family of sets $\cF\subseteq 2^V$ of a finite ground set $V$ is said
to satisfy the Helly-property, if any sub-family $\cX$ of pairwise
intersecting members of $\cF$ has a non-empty intersection,
i.e. $\cX\subseteq \cF$ and $X\cap
X'\ne \emptyset$ for every $X,X'\in \cX$ implies that $\cap\cX\ne \emptyset$.
\end{definition}

The following definition is taken from \cite{bbf}.

\begin{definition}
Given a digraph $G=(V,A)$, a non-empty subset of nodes $X\subseteq V$
is called \emph{in-solid}, if $\varrho(Y)>\varrho(X)$ holds for
every nonempty $Y\subsetneq X$.
\end{definition}

\begin{theorem}[Bárász, Becker, Frank \cite{bbf}]
The family of in-solid sets  of a digraph satisfies the Helly-property.
\end{theorem}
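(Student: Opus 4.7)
The argument hinges on the submodularity of the in-degree function,
\[
\varrho(X)+\varrho(Y)\ge \varrho(X\cap Y)+\varrho(X\cup Y),
\]
combined with the strict inequalities imposed by in-solidness. The starting observation is: if $X, Y$ are in-solid with $X\cap Y\ne\emptyset$ and neither contains the other, then $X\cap Y$ is a proper non-empty subset of each, so in-solidness forces $\varrho(X\cap Y)>\max(\varrho(X),\varrho(Y))$, and submodularity then yields $\varrho(X\cup Y)<\min(\varrho(X),\varrho(Y))$.

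The plan is to prove Helly by induction on $|\cX|$; the base case $|\cX|=2$ is vacuous. The heart of the argument is the three-set case. Suppose for contradiction that in-solid sets $X_1,X_2,X_3$ pairwise intersect but $X_1\cap X_2\cap X_3=\emptyset$. If some $X_i\subseteq X_j$, then $X_i\cap X_k\subseteq X_i\cap X_j\cap X_k$ is non-empty, a contradiction. Otherwise the pairwise intersections $S_{ij}:=X_i\cap X_j$ are pairwise disjoint and each is a proper non-empty subset of the two members of $\cX$ it sits inside. Applying the in-solid property of $X_i$ to the proper subset $S_{ij}\subsetneq X_i$ and expanding $\varrho$ in terms of arcs gives a strict arc-count inequality: the number of arcs from $X_i\setminus S_{ij}$ into $S_{ij}$ strictly exceeds the number of arcs from outside $X_i$ into $X_i\setminus S_{ij}$. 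Cycling these applications across $(i,j)=(1,2),(2,3),(3,1)$ yields a chain of strict inequalities that closes on itself, giving the contradiction. The pattern is already visible in the minimal model $X_1=\{1,2\}$, $X_2=\{2,3\}$, $X_3=\{1,3\}$, in which the in-solid constraints force a cyclic chain of strict inequalities among the arc multiplicities on $\{1,2,3\}$.

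For $|\cX|\ge 4$ the induction proceeds by contradiction. Suppose $\cX$ is a pairwise-intersecting family of in-solid sets with $\bigcap\cX=\emptyset$ and of minimum cardinality. Pick any pair $X,Y\in\cX$. By the three-set case applied to $\{X,Y,W\}$ for each $W\in\cX-\{X,Y\}$, the set $X\cap Y$ meets every other member of $\cX$. One then selects an in-solid set $Z\subseteq X\cap Y$ still meeting every $W\in\cX-\{X,Y\}$, and invokes the inductive hypothesis on the family $\{Z\}\cup(\cX-\{X,Y\})$ of fewer in-solid sets; a common element of this smaller family lies in $Z\subseteq X\cap Y$ and in every other $W$, hence in $\bigcap\cX$, a contradiction.

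The main obstacle is in the three-set case, where the cycle of strict arc-count inequalities has to be forged by the right choice of subset against which to apply in-solidness, followed by careful bookkeeping of arcs among the six Venn-diagram regions of $X_1,X_2,X_3$. A secondary difficulty is to justify the existence of the in-solid set $Z\subseteq X\cap Y$ meeting every remaining member in the inductive step; informally this should follow by iterating the three-set Helly property inside $X\cap Y$, but the explicit construction of $Z$ demands care.
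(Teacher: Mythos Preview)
The paper does not prove this theorem; it is quoted from \cite{bbf}, with the remark that those authors in fact establish the stronger statement that the in-solid sets form a \emph{subtree hypergraph} (from which the Helly property is immediate). So there is no in-paper proof to compare your proposal against.

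On the merits of your proposal: the three-set case is plausible and the cyclic-inequality pattern you describe in the minimal model $X_1=\{1,2\}$, $X_2=\{2,3\}$, $X_3=\{1,3\}$ does indeed close up to a contradiction (e.g.\ chaining $\varrho(\{2\})>\varrho(\{1,2\})$, $\varrho(\{3\})>\varrho(\{2,3\})$, $\varrho(\{1\})>\varrho(\{1,3\})$ with the other three inequalities yields $d(2{\to}1)>d(2{\to}1)$ after expanding). With more bookkeeping the general three-set case can be handled similarly, though you must also deal with the degenerate situations where some $X_i\setminus(X_j\cup X_k)$ is empty.

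The genuine gap is the inductive step. The three-set Helly property does \emph{not} imply the full Helly property for an arbitrary set system: the four $3$-element subsets of $\{1,2,3,4\}$ are pairwise intersecting and any three of them meet, yet all four have empty intersection. Your induction therefore rests entirely on producing an \emph{in-solid} set $Z\subseteq X\cap Y$ that meets every remaining $W$. You call this a ``secondary difficulty'', but it is the heart of the matter: $X\cap Y$ need not be in-solid, and ``iterating the three-set Helly property inside $X\cap Y$'' only tells you that each $X\cap Y\cap W$ is nonempty --- it does not hand you a single in-solid subset of $X\cap Y$ meeting all of them (indeed, a singleton $Z=\{v\}$ would already have to lie in $\bigcap\cX$, which is what you are trying to prove). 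Without a concrete construction of such a $Z$, or a different reduction that exploits in-solidness beyond the three-set case, the induction does not go through.
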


The authors of \cite{bbf} prove in fact more: they show that the
family of in-solid sets is a \emph{subtree-hypergraph}, but we will
only use the Helly property here. The following theorem formulates the
key observation for the main result.

\begin{theorem}\label{thm:specnode}
In a digraph $G=(V,A)$ there exists a node $t\in V$ such that
$\varrho(Z)\ge \frac{\mu(G)}{2}$ for every non-empty $Z\subseteq
V-t$.
\end{theorem}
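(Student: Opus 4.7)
The plan is to prove this by contradiction using the Helly property for in-solid sets. Suppose no such node exists: then for every $t \in V$, there is a nonempty subset $Z \subseteq V - t$ with $\varrho(Z) < \mu(G)/2$. For each $t$, choose such a set $Z_t$ of minimum cardinality. I claim that this minimality forces $Z_t$ to be in-solid. Indeed, for any nonempty $Y \subsetneq Z_t$ we have $Y \subseteq V - t$ and $Y$ is nonempty, so by minimality $\varrho(Y) \ge \mu(G)/2 > \varrho(Z_t)$, which is exactly the in-solid condition.

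Next I would show that the family $\{Z_t : t \in V\}$ is pairwise intersecting. Suppose for distinct $t, t' \in V$ that $Z_t \cap Z_{t'} = \emptyset$. Each $Z_t$ is a nonempty proper subset of $V$ (it misses $t$), so $Z_t$ and $Z_{t'}$ form an admissible pair in the definition of $\mu(G)$, giving
\[
\mu(G) \;\le\; \varrho(Z_t) + \varrho(Z_{t'}) \;<\; \tfrac{\mu(G)}{2} + \tfrac{\mu(G)}{2} \;=\; \mu(G),
\]
a contradiction. So the $Z_t$ are pairwise intersecting.

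Now I apply the Helly property of the family of in-solid sets (B\'ar\'asz--Becker--Frank, as cited): since $\{Z_t\}_{t \in V}$ is a pairwise intersecting subfamily of in-solid sets, the intersection $\bigcap_{t \in V} Z_t$ is nonempty. Pick any $v$ in this intersection; then $v \in Z_v$, contradicting $Z_v \subseteq V - v$. This contradiction completes the proof.

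The heart of the argument is the Helly property, which is already stated as a theorem above; the only real work is recognizing that an inclusionwise minimal ``bad'' set is automatically in-solid, and that pairwise disjointness of two such sets would violate the definition of $\mu$. The bound $\mu(G)/2$ is exactly what is needed to make the pairing argument go through: two sets each of in-degree strictly less than $\mu(G)/2$ cannot be disjoint, which is the entire reason Helly can be invoked.
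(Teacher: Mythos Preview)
Your proof is correct and follows essentially the same approach as the paper: both rely on the Helly property of in-solid sets, and both derive pairwise intersection from the observation that two disjoint sets of in-degree below $\mu(G)/2$ would contradict the definition of $\mu(G)$. The only cosmetic difference is that the paper argues directly---it defines $\cX=\{X:X\text{ in-solid},\ \varrho(X)<\mu(G)/2\}$, shows $\cX$ is pairwise intersecting, and picks $t\in\bigcap\cX$---whereas you wrap the same idea in a proof by contradiction and obtain your in-solid sets as cardinality-minimal bad sets rather than via the ``every set contains an in-solid subset of no larger in-degree'' observation.
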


\begin{proof}
Consider the family $\cX=\{X\subseteq V: X$ is in-solid and
$\varrho(X)< \frac{\mu(G)}{2}\}$. If there were two disjoint members
$X,X'\in \cX$ then $\varrho(X)+\varrho(X')<\mu(G)$ would a contradict
the definition of $\mu(G)$. Therefore, by the Helly-property of the
in-solid sets, there exists a node $t\in \cap\cX$. This node satisfies
the requirements of the theorem, since if there was a non-empty
$Z\subseteq V-t$ with $\varrho(Z)< \frac{\mu(G)}{2}$, then $Z$ would
necessarily contain an in-solid set $Z'\subseteq Z$ with
$\varrho(Z')\le \varrho(Z)$ (this follows from the definition of in-solid
sets), contradicting the choice of $t$.
\end{proof}

In a digraph $G=(V,A)$, a node $a\in V$ with the property
$\varrho(Z)\ge \frac{\mu(G)}{2}$ for every non-empty $Z\subseteq
V-a$ will be called an \textbf{anchor node} of $G$.

\subsection{A polynomial-time algorithm}\label{sec:algorithm}

In this section we present a polynomial time algorithm to determine
the robustness of tight arborescences, which also implies a polynomial
time algorithm to determine the robustness of minimum cost
arborescences.  A sketch of the algorithm goes as follows. 
We maintain a subset $\cL'$ of $\cL$, which is initiated with $\cL':=\cL$. For a
minimal member $F$ of $\cL'$, we apply Theorem \ref{thm:specnode}, and
find an anchor node $a_F$ of $D[F]$. We replace the tail of every arc leaving
$F$ by $a_F$, remove $F$ from $\cL'$, and repeat until $\cL'$ goes
empty. This way we construct a sequence of digraphs on the same node
set: let $D'$ be the last member of this sequence. Then for any $a\in V$ we construct another digraph $D_a$ from $D'$: for every
$F\in\cL$ with $a \in F$ and every arc of $D'$ leaving $F$ we replace
the tail of this arc with $a$. Finally, we determine minimum double cuts
in $D'[F]$ for every $F\in \cL$, and we also determine
minimum double cuts in $D_a[F]$ for every $F\in \cL$ with
$a\in F$: this way
we have determined $O(n^2)$ double cuts altogether. Each of these
double cuts also determines an $\cL$-double cut in $D$, and we pick the
one with the smallest cardinality, to claim that it actually is
optimal.
The algorithm is given as a pseudocode below.


\newcommand{\covalg}{COVERING\_\-TIGHT\_\-ARBORESCENCES}

\newcommand{\opt}{best}

\begin{pszkod}
{Algorithm \covalg}

\item[] \textbf{INPUT} A digraph $D=(V,A)$ and a laminar family
  $\cL\subseteq 2^V$ with $V\in \cL$

\item[] \textbf{OUTPUT} $\gamma(D,\cL)$

\item[] /* First Phase: creating graphs $D'$ and $D_a$ */

\item Let  $D'=D$ and $\cL'=\cL$.

\item While $\cL'\ne  \emptyset$ do

\tab

  \item Choose an inclusionwise minimal set $F\in \cL'$

  \item Let $a_F\in F$ be an anchor node of $D'[F]$ (apply Theorem \ref{thm:specnode} to $G=D'[F]$)\label{st:anchor}
  
  \item Modify $D'$: relocate the tail of all arcs leaving $F$ to $a_F$

  \item Let  $\cL'=\cL'-F$

\untab

\item For every $a\in V$ 
\tab

  \item Let $D_a$ be obtained from $D'$ by relocating the tail of every arc
    leaving a set $F\in \cL$ with $a\in F$ to $a$

\untab

\item[] /*Second Phase: finding the optimum*/

\item Let $\opt=+\infty$ and $\cL'=\cL$.

\item While $\cL'\ne  \emptyset$ do
\tab

  \item Choose an inclusionwise minimal set $F\in \cL'$

  \item If $\opt>\mu(D'[F])$ then $\opt:=\mu(D'[F])$\label{st:muD}

  \item For each $a\in F$ do

  \tab
  

    \item If $\opt>\mu(D_{a}[F])$ then $\opt:=\mu(D_{a}[F])$\label{st:muDr}

  \untab

  \item Let  $\cL'=\cL'-F$

\untab

\item Return $\opt$.

\end{pszkod}







  
  








The algorithm above is formulated in a way that it returns the optimum
$\gamma(D,\cL)$ in question, but by the correspondance between the arc
set of $D$ and that of $D'$ and $D_a$ in the algorithm, clearly we can
also return the optimal arc set, too. It is also clear that the
algorithm can be formulated to run in strongly polynomial time for
Problem \ref{prob:1'}, too: we only need to modify the definition of
the in-degree function $\varrho_D$, so that the weights are taken into
account.

\begin{theorem}
The Algorithm \textsf{\covalg} returns a correct answer.
\end{theorem}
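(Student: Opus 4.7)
The plan is to prove that the value returned by the algorithm equals $\gamma(D,\cL)$ by showing the two inequalities. For the direction ``returned value $\ge \gamma(D,\cL)$'', the plan is to observe that every quantity inspected in the second phase is at least $\gamma(D,\cL)$: by Claim \ref{cl:Thetamu} we have $\mu(D'[F])\ge \Theta_{F,D',\cL}$, which equals $\Theta_{F,D,\cL}$ by the tail-relocation invariance of $\Theta$ noted after Claim \ref{cl:tailreloc}, and this is at least $\gamma(D,\cL)$ by Theorem \ref{thm:minimin}; the same chain applies to each $\mu(D_a[F])$.

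For the reverse direction, I would use Theorem \ref{thm:minimin} to pick $F^*\in\cL$ with $\Theta_{F^*}=\gamma(D,\cL)$, choosing $F^*$ \emph{inclusionwise minimal} with this property, and fix disjoint non-empty $Z_1^*,Z_2^*\subseteq F^*$ attaining $\Theta_{F^*}$. By the minimality of $F^*$, I would argue that no proper $F'\in\cL[F^*]\setminus\{F^*\}$ meets both $Z_1^*$ and $Z_2^*$: otherwise $(Z_1^*\cap F',Z_2^*\cap F')$ would give $\Theta_{F'}\le \Theta_{F^*}$ with $F'\subsetneq F^*$, contradicting the choice of $F^*$. So each proper member of $\cL[F^*]$ is associated with at most one side $Z_i^*$.

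The aim is then to exhibit one of the $O(n)$ candidate digraphs the algorithm considers for $F^*$---either $D'[F^*]$ or $D_a[F^*]$ for some $a\in F^*$---in which $(Z_1^*,Z_2^*)$ itself is a double cut of size $\Theta_{F^*}$, forcing the returned value to be at most $\Theta_{F^*}=\gamma(D,\cL)$. This requires $\varrho_{\tilde D[F^*]}(Z_i^*)=f_{D[F^*],\cL[F^*]}(Z_i^*)$ in the chosen $\tilde D$. The discrepancy between $\varrho$ and $f$ comes from arcs entering $Z_i^*$ in $D[F^*]$ that also leave some $F'\in\cL_{Z_i^*}[F^*]$: in $D'$ their tails sit at the anchor $a_{F'}$ of $D'[F']$ chosen via Theorem \ref{thm:specnode}, while in $D_a$ the tails of arcs leaving each $F'\ni a$ are redirected to $a$. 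The discrepancy vanishes exactly when all of these relocated tails lie in the corresponding $Z_i^*$.

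The main obstacle is therefore to produce, for the fixed $(F^*,Z_1^*,Z_2^*)$, a single $\tilde D$ in which every relevant anchor ends up in the correct $Z_i^*$. I plan to choose $a\in F^*\cap(Z_1^*\cup Z_2^*)$ so that precisely the sub-sets $F'$ whose algorithmic anchor misses the required $Z_{i(F')}^*$ are those containing $a$, and to argue such an $a$ exists by combining (i) the Helly-type anchor property---$\varrho_{D'[F']}(Y)\ge \mu(D'[F'])/2 \ge \gamma(D,\cL)/2$ for every non-empty $Y\subseteq F'-a_{F'}$---with (ii) local adjustments of the optimal pair $(Z_1^*,Z_2^*)$ allowed by the minimality of $F^*$, to derive a contradiction whenever no suitable $a$ exists. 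Matching the algorithm's first-phase anchor choices to the geometry of the optimum is the delicate combinatorial core of the proof, and it explains why the algorithm must sweep $D'[F]$ together with all $D_a[F]$ for $a\in F$ in the second phase.
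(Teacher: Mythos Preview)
Your outline is essentially the paper's proof: same easy direction via $\mu\ge\Theta$ and tail-relocation invariance, same choice of an inclusionwise minimal $F^*$ realizing $\gamma(D,\cL)$, same observation that proper members of $\cL[F^*]$ meet at most one of $Z_1^*,Z_2^*$, and the same goal of exhibiting $\tilde D\in\{D',D_a\}$ with $\varrho_{\tilde D[F^*]}(Z_i^*)=f_{D[F^*]}(Z_i^*)$.

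The only place where you are vague is the ``delicate combinatorial core'', and there your plan slightly drifts from what is actually needed. The paper does \emph{not} adjust $Z_1^*,Z_2^*$; instead it proves directly that among the ``bad'' proper members $F'\in\cL_{Z_1^*\cup Z_2^*}[F^*]$ (those with $a_{F'}\notin Z_1^*\cup Z_2^*$) no two are disjoint. If $F',F''$ were disjoint bad sets, then by the anchor property $\varrho_{D'[F']}(Z_{i(F')}^*\cap F')\ge \mu(D'[F'])/2$ and likewise for $F''$; since $f_{D'[F^*]}(Z_i^*)$ dominates each of these $\varrho$-terms one gets
\[
\gamma(D,\cL)=\Theta_{F^*}\ \ge\ \tfrac{1}{2}\mu(D'[F'])+\tfrac{1}{2}\mu(D'[F''])\ >\ \gamma(D,\cL),
\]
the strict inequality coming from the minimality of $F^*$ (so $\mu(D'[F'])\ge\Theta_{F'}>\gamma(D,\cL)$ for each proper $F'$). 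Being pairwise intersecting in a laminar family, the bad sets thus form a chain; if the chain is empty use $\tilde D=D'$, otherwise pick any $a$ in $Z_{i(F')}^*\cap F'$ for the unique minimal bad $F'$ and use $\tilde D=D_a$. Your ingredient (i) is exactly right; ingredient (ii) (``local adjustments of $(Z_1^*,Z_2^*)$'') is unnecessary and should be replaced by this chain argument.
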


\begin{proof}
First of all, since $\Theta_{F,D}=\Theta_{F,D'}=\Theta_{F,D_a}$ for
any $F\in \cL$ and $a\in F$, and $\Theta_{F,D'}\le
\mu(D'[F])$ and $\Theta_{F,D_a}\le \mu(D_a[F])$, the algorithm returns
an upper bound for the optimum $\gamma(D,\cL)$ in question by Theorem
\ref{thm:minimin}.

On the other hand, assume that $F$ is an inclusionwise minimal member
of $\cL$ such that the optimum $\gamma(D,\cL)=\Theta_{F,D}$
(such a set exists again by Theorem \ref{thm:minimin}). Assume
furthermore that the non-empty disjoint sets $Z_1,Z_2\subseteq F$ are
such that $\Theta_{F,D}=f_{D[F]}(Z_1)+f_{D[F]}(Z_2)$. The following
sequence of observations proves the theorem.
\begin{enumerate}
\item First observe, that any member $F'\in \cL$ which is a proper
  subset of $F$ can intersect at most one of $Z_1$ and $Z_2$. Assume
  the contrary, and note that $f_{D[F]}(Z_i)\ge f_{D[F']}(Z_i\cap F')$
  holds for $i=1,2$, contradicting the minimal choice of $F$.
\item Next observe that there do not exist two disjoint members
  $F',F''\in \cL_{Z_1\cup Z_2}$ that are proper subsets of $F$ such
  that $a_{F'}$ and $a_{F''}$ are both outside $Z_1\cup Z_2$. To see
  this assume again the contrary and let $F',F''$ be two inclusionwise
  minimal such sets. By exchanging the roles of $Z_1$ and $Z_2$ or the
  roles of $F'$ and $F''$ we arrive at the following two cases: either
  both $F'$ and $F''$ intersect $Z_1$, or $F'$ intersects $Z_1$ and
  $F''$ intersects $Z_2$. The proof is analogous for both cases.
  Assume first  that both $F'$ and $F''$
  intersect $Z_1$. Then we have
\begin{multline}
\gamma(D,\cL)=\Theta_{F,D}=\Theta_{F,D'}=f_{D'[F]}(Z_1)+f_{D'[F]}(Z_2)\ge \\\ge
f_{D'[F]}(Z_1)\ge  f_{D'[F']}(Z_1\cap F')+f_{D'[F'']}(Z_1\cap
F'')=\\
=\varrho_{D'[F']}(Z_1\cap F')+\varrho_{D'[F'']}(Z_1\cap F'')\ge
\\\ge \frac{\mu({D'[F']})}{2} + \frac{\mu({D'[F'']})}{2} 
>\gamma(D,\cL),
\end{multline}
a contradiction. 
Here the second inequality follows from
the definition of the function $f$, the equality following it is
because $a_{F'''}\in Z_1$ if $F'''\in \cL_{Z_1}$ is a proper subset of
$F'$ or $F''$. The next inequality follows from the definition of
$a_{F'}$ and $a_{F''}$, and the last (strict) inequality is by the
minimal choice of $F$.

In the other case, when  $F'$ intersects $Z_1$ and
  $F''$ intersects $Z_2$, we get  the contradiction in a similar way:
\begin{multline}
\gamma(D,\cL)=\Theta_{F,D}=\Theta_{F,D'}=f_{D'[F]}(Z_1)+f_{D'[F]}(Z_2)\ge \\
\ge  f_{D'[F']}(Z_1\cap F')+f_{D'[F'']}(Z_2\cap F'')=
\varrho_{D'[F']}(Z_1\cap F')+\varrho_{D'[F'']}(Z_2\cap F'')\ge
\\\ge \frac{\mu({D'[F']})}{2} + \frac{\mu({D'[F'']})}{2} 
>\gamma(D,\cL).
\end{multline}

\item Therefore we are left with two cases. In the first case assume
  that $a_{F'}\in Z_1\cup Z_2$ holds for every $F'\in \cL_{Z_1\cup Z_2}$
  with $F'\subsetneq F$. In that case we have that
  $f_{D'[F]}(Z_i)=\varrho_{D'[F]}(Z_i)$ for both $i=1,2$, and thus
  $\gamma(D,\cL)=\Theta_{F,D'}=\sum_{i=1,2}\varrho_{D'[F]}(Z_i)\ge
  \mu(D'[F])\ge \Theta_{F,D'}$.

\item In our last case there exists a unique inclusionwise minimal
  $F'\in \cL_{Z_1\cup Z_2}$ such that $F'$ is a proper subset of $F$
  with $a_{F'}\notin Z_1\cup Z_2$. Assume without loss of generality that
  $F'$ intersects ${Z_1}$ and choose an arbitrary $a\in F'\cap
  Z_1$. Then $f_{D_a[F]}(Z_i)=\varrho_{D_a[F]}(Z_i)$ for both $i=1,2$,
  and thus
  $\gamma(D,\cL)=\Theta_{F,D_a}=\sum_{i=1,2}\varrho_{D_a[F]}(Z_i)\ge
  \mu(D_a[F])\ge \Theta_{F,D_a}$.

\end{enumerate}
\end{proof}


\subsection{Running time}\label{sec:runtime}

Let $T(N,M)$ be the time needed to find a minimum $s-t$ cut in an
edge-weighted digraph having $N$ nodes and $M$ arcs (that is, $M\le
N^2$ here). 




The natural weighted version of Problem \ref{prob:3} is the following.
\begin{problem}\label{prob:3w}
Given a digraph $D=(V,A)$, and a nonnegative weight function $w:A\to
\Rset_+$, and a laminar family $\cL\subseteq 2^V$, find a subset $H$
of the arc set such that $H$ intersects every $\cL$-tight arborescence
and $w(H)$ is minimum.
\end{problem}

As mentioned above, if we want to solve the weighted Problem
\ref{prob:3w}, the only thing to be changed is that the in-degree
$\varrho(X)$ of a set should  mean the weighted in-degree. We will
analyze the algorithm in this sense, so we assume that the input
digraph does not contain parallel arcs, but weighted ones.

In order to analyze the performance of Algorithm \covalg, let $n$ and
$m$ denote the number of nodes and arcs in its input (so $m\le n^2$).

To implement the algorithm above we need 2 subroutines.  The first
subroutine finds an anchor node in an edge-weighted digraph.  This
subroutine will be used $|\cL|\le n$ times in Step \ref{st:anchor} for
digraphs having at most $n$ nodes and at most $m$ arcs.  By the
definition of anchor nodes, any node $r$ maximizing
$\min\{\varrho_G(X):\emptyset\ne X\subseteq V-r\}$ can serve as an
anchor node. Therefore, finding an anchor can be done in $n^2T(n,m)$ time .

The second subroutine determines $\mu(G)$ for a given edge-weighted
digraph $G$.  This subroutine is used at most $n$ times in Step
\ref{st:muD} and $n^2$ times in Step \ref{st:muDr} for digraphs having
at most $n$ nodes and at most $m$ arcs. Note however that these
suboutine calls are not independent from each other, and we will make
use of this fact later.


We can determine $\mu(G)$ for a given edge-weighted digraph $G$ the
following way. Take two disjoint copies of $G$, and reverse all arcs
in the first copy (and denote this modified first copy by $G^1$). Let
the second copy be denoted by $G^2$, and for each $v\in V(G)$ let the
corresponding node in $V(G^i)$ be $v^i$ for $i=1,2$.  For each $v\in
V(G)$ add an arc $v^1v^2$ of infinite capacity from $v^1$ to its
corresponding copy $v^2\in V(G^2)$. This way we define an auxiliary
weighted digraph $\hat G=(V(G^1)\cup V(G^2), A(G^1)\cup A(G^2)\cup \{v^1v^2: v\in V(G)\})$. It is easy to see that for some $s\ne t$ nodes in
$V(G)$ we have $\min\{\delta_{\hat G}(Z):s^1\in Z\subseteq V(\hat
G)-t^2\}=\min\{\varrho_G(X)+\varrho_G(Y): s\in X\subsetneq V(G), t\in
Y\subsetneq V(G), X\cap Y=\emptyset\}$. Thus, by trying every possible
pair $s,t$, we can calculate $\mu(G)$ with $n^2$ minimum $s^1-t^2$-cut
computations in $\hat G$ in time $n^2T(n,m)$. 

We will calculate $\mu(G)$ for $O(n^2)$ graphs $G$ (each having at
most $n$ nodes and $m$ arcs): $n$ times in Step \ref{st:muD} for the
graphs $D'[F]$, and $n^2$ times in Step \ref{st:muDr} for the graphs
$D_{a}[F]$. On the other hand, as mentioned earlier, these calls are
not independent from each other, since if $\mu(D_{a}[F])<\mu(D'[F])$
for some $F\in \cL$ and $a\in F$ then $a\in X\cup Y$ has to hold for
the (optimal disjoint non-empty) sets $X,Y\subseteq F$ giving
$\mu(D_{a}[F])=\varrho_{D_{a}[F]}(X)+\varrho_{D_{a}[F]}(Y)$. Therefore
checking whether $\mu(D_{a}[F])<\opt$ or not in Step \ref{st:muDr}, we
only need to calculate  minimum $a^1-t^2$-cuts in $\widehat {D_a[F]}$
(for the node $a^1\in V((D_{a}[F])^1)$ corresponding to $a$ and every
$t^2\in V((D_{a}[F])^2)$ corresponding to nodes $t\in F-a$), needing only
$n$ minimum cut computations.

Putting everything together we get that the Algorithm \covalg\ can be
implemented to run in $n^3T(n,m)$ time.
It seems possible to further reduce the complexity of Steps
\ref{st:anchor} and \ref{st:muD}, however we don't know how to do this for Step
\ref{st:muDr}.


\subsection{Remarks on the polyhedral approach}
The tractability of the weighted Problem 5 is equivalent with optimization over the following polyhedron: 
\begin{align*}
\mathcal{P}:=conv(\{\chi _H:H\text{ a $\cL$-double cut}\})+\mathbb{R}_+^A.
\end{align*}
A completely different approach to the problem would be to directly show that this polyhedron $\mathcal{P}$ is tractable, which hinges upon finding a nice polyhedral description of the given polyhedron. Firstly, the polyhedron has facets with large coefficients, which rules out a rank-inequality type description. Secondly, the polyhedron seems to be of a composite nature in the following sense. For $\cL =\emptyset$,  
\begin{align*}
\mathcal{P}=\textstyle conv \left( \bigcup _{s\ne t, s,t\in V} conv(\{\chi _H:H\text{ a double cut separating $s, t$}\})+\mathbb{R}_+^A \right) ,
\end{align*}
where a double cut is said to separate $s,t$ if $s\in Z_1, t\in
Z_2$. Thus optimization over $\mathcal{P}$ reduces to optimization
over ${n\choose 2}$ polyhedra of double cuts separating a given pair
$s,t$. For any given pair $s,t$, this polyhedron has a nice
description, and also nice combinatorial algorithm for
optimization. When we apply this approach to a general $\cL$, then we
need to consider the union of an exponential number of polyhedra: one
for every possible choice of an anchor node in every set of
$\cL$. Thus the proposed approach only results in an efficient
algorithm for the special case $\cL =\emptyset$, and leaves the
general case without a polyhedral description.

\section{Further notes}\label{sec:fur}

As we have already pointed out, Problems \ref{prob:1} and
\ref{prob:1'} can be interpreted as covering minimum cost common bases
of two matroids.  In fact they can be interpreted as covering common
bases of a graphic matroid and a partition matroid, too, as we show
below. Let us start with a general observation which states that the
problem of covering minimum cost common bases of two matroids can be
reduced to the problem of covering common bases of two matroids.

\begin{theorem}\label{thm:commonbase}
Given two matroids $M_1$ and $M_2$ over the same ground set $S$, a
cost function $c:S\to \Rset$,  assume that $M_1$ and $M_2$ have a
common base, then there exist matroids $M_1'$ and $M_2'$ over $S$ so
that the minimum $c$-cost common bases of $M_1$ and $M_2$ are the
common bases of $M_1'$ and $M_2'$.
\end{theorem}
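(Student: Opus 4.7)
The plan is to reduce the weighted problem to the unweighted one via two standard ingredients: (1) the well-known fact that for a single matroid $M$ and a cost function $w$, the family of minimum $w$-cost bases of $M$ is itself the set of bases of a matroid (call it $M_w$), and (2) Frank's weight-splitting theorem for weighted matroid intersection, which will let us break $c=c_1+c_2$ so that the min $c$-cost common bases of $M_1,M_2$ are exactly the sets that are simultaneously a min $c_1$-cost base of $M_1$ and a min $c_2$-cost base of $M_2$.

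First I would recall and (briefly) justify ingredient (1). Let $c_1^{(1)}<c_2^{(1)}<\dots<c_k^{(1)}$ be the distinct values of $w$ and set $S_i=w^{-1}(c_i^{(1)})$, $T_i=S_1\cup\dots\cup S_i$. By the greedy characterization of optimal bases, a base $B$ of $M$ has minimum $w$-cost if and only if $|B\cap T_i|=r_M(T_i)$ for every $i$. Consequently the min-$w$-cost bases of $M$ are exactly the bases of the direct sum
\[
M_w \;=\; \bigoplus_{i=1}^{k}\bigl((M|T_i)/T_{i-1}\bigr),
\]
which is a matroid on $S$. This takes care of translating ``minimum $w$-cost base'' into ``base of a matroid'' for one matroid at a time.

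Next I would apply Frank's weight-splitting theorem to the instance $(M_1,M_2,c)$. Since, by hypothesis, $M_1$ and $M_2$ have a common base, the weighted matroid intersection LP has an optimal primal-dual pair on the common base polytope, yielding a decomposition $c=c_1+c_2$ with $c_i:S\to\Rset$ such that
\[
\min\{c(B):B\text{ common base of }M_1,M_2\} \;=\; \min\{c_1(B_1):B_1\text{ base of }M_1\}+\min\{c_2(B_2):B_2\text{ base of }M_2\}.
\]
From this identity one direction is immediate: a common base $B$ that is simultaneously min-$c_1$ in $M_1$ and min-$c_2$ in $M_2$ automatically minimises $c$. For the converse, if $B$ is a min-$c$-cost common base, then
\[
c(B)=c_1(B)+c_2(B)\;\ge\;\min c_1\text{-base}(M_1)+\min c_2\text{-base}(M_2)\;=\;c(B),
\]
forcing equality in both summands, hence $B$ is a min-$c_i$ base in $M_i$ for $i=1,2$.

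Finally I would combine the two ingredients by setting $M_1':=(M_1)_{c_1}$ and $M_2':=(M_2)_{c_2}$, using the construction from ingredient (1). By (1), the bases of $M_i'$ are precisely the min-$c_i$-cost bases of $M_i$; by the equivalence established via weight-splitting, a set is a common base of $M_1',M_2'$ exactly when it is a min-$c$-cost common base of $M_1,M_2$. The one place that requires a small care in writing up is the hypothesis that $M_1,M_2$ admit a common base: without it, Frank's theorem does not immediately give a decomposition behaving well on bases (as opposed to common independent sets of smaller size). Apart from that point, no genuine obstacle arises since both ingredients are classical.
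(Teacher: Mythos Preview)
Your proposal is correct and follows essentially the same route as the paper: invoke Frank's weight-splitting theorem to obtain $c=c_1+c_2$ so that min-$c$-cost common bases are exactly the sets that are simultaneously min-$c_i$-cost bases of $M_i$, and then use the (well-known) fact that the min-cost bases of a single matroid form the bases of another matroid. The paper's proof is terser and omits the explicit description of $M_w$ and the two-line derivation of the ``iff'' from the weight-splitting identity, but the argument is the same.
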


\begin{proof}
The simplest way to see this is through Frank's Weight Splitting
Theorem \cite{frank1981weighted}. It states, that the assumptions of our
theorem imply the existence of cost functions $c_1, c_2:S\to \Rset$ so
that $c=c_1+c_2$ and a common base of $M_1$ and $M_2$ has minimal
$c$-cost if and only if it is a $c_i$-minimum cost base of $M_i$ for
both $i=1,2$. As the family $\cB_i=\{B\subseteq S: B $ is a
$c_i$-minimum cost base of $M_i\}$ is the family of bases of a matroid
$M_i'$, the theorem follows.
\end{proof}

Let us specialize Theorem \ref{thm:commonbase} for our problems. Given
a digraph $D=(V,A)$ with a designated node $r\in V$, and a cost
function $c:A\to \Rset$, as in Problem \ref{prob:1}, Fukerson's
Theorem (Theorem \ref{thm:fulk}) tells us that we can find in polynomial time a
subset $A'\subseteq A$ of arcs (\emph{tight arcs}) and a laminar
family $\cL\subseteq 2^{V-r}$ such that an $r$-arborescence is of
minimum cost if and only if it uses only tight arcs and it is
$\cL$-tight. Using Claim \ref{cl:Ltight}  we
get the following:  $\{B\subseteq A: B$ is an optimal
$r$-arborescence$\} = \{B\subseteq A': B[F]$ is a tree spanning $F$
for every $F\in \cL\cup \{V\}$ and $\varrho_B(v)=1$ if $v\in V-r$ and $\varrho_B(r)=0\}$.
Let us introduce the definition of $\cL$-tight spanning trees.

\begin{definition}
Given a connected graph $G=(V,E)$ and a family $\cF\subseteq 2^V$, a
spanning tree $B\subseteq E$ is said to be \textbf{\cF-tight} if
$B[F]$ is a tree spanning $F$ for every $F\in \cF$.
\end{definition}

The following claim is easy to prove.

\begin{claimnum}
\label{cl:tightmaxcost}
Given a graph $G=(V,E)$ and a family $\cF\subseteq 2^V$, define a cost
function $c_1:E\to \Zset_+$ as $c_1(e)=|\{F\in \cF: e\subseteq F\}|$. If
there exists a \cF-tight spanning tree then the \cF-tight spanning trees are the maximum $c_1$-cost spanning trees.
\end{claimnum}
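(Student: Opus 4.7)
The plan is to rewrite the total cost $c_1(B)$ of a spanning tree as a double sum, swap the order of summation, and then bound each resulting term using the elementary fact that a forest on $k$ vertices has at most $k-1$ edges. First I would compute, by interchanging the summations,
\[
c_1(B)=\sum_{e\in B}|\{F\in\cF:e\subseteq F\}|=\sum_{F\in\cF}|E(B[F])|,
\]
where $E(B[F])$ denotes the set of edges of $B$ having both endpoints in $F$.

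Next, for each $F\in\cF$, since $B$ is a spanning tree of $G$ and is therefore acyclic, the restriction $B[F]$ is a forest on the vertex set $F$. Consequently $|E(B[F])|\le |F|-1$, with equality if and only if $B[F]$ is connected, i.e., if and only if $B[F]$ is a tree spanning $F$. Summing over $F\in\cF$ yields the uniform upper bound
\[
c_1(B)\le \sum_{F\in\cF}(|F|-1),
\]
valid for every spanning tree $B$ of $G$, with equality if and only if $B$ is $\cF$-tight in the sense of the definition immediately preceding the claim.

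Finally, the hypothesis that at least one $\cF$-tight spanning tree exists guarantees that this upper bound is actually attained by some spanning tree; hence the right-hand side equals the maximum $c_1$-cost of a spanning tree of $G$, and a spanning tree $B$ attains this maximum if and only if it is $\cF$-tight. This is exactly the claim. The only nontrivial ingredient is the swap of summation order, and the only combinatorial input is the edge bound for forests, so there is no real obstacle; the hypothesis that an $\cF$-tight spanning tree exists is used solely to certify that the (otherwise purely upper-bound) inequality is tight, which is what turns the "$\le$" into a characterization of the maximum.
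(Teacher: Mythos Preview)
Your proof is correct. The paper does not actually prove this claim---it merely states that it is ``easy to prove''---so your argument fills in exactly what the authors omit, and the double-counting followed by the forest edge bound is the natural way to do it.
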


We will need one more observation stating that the matroid of maximum cost spanning trees of a graph is a graphic matroid.

\begin{claimnum}\label{cl:maxsptgraphic}
Given a connected graph $G=(V,E)$ and a cost function $c_1:E\to
\Rset$, there exists another graph $G'=(V', E')$ and a bijection
$\phi:E\to E'$ so that $T\subseteq E$ is a maximum $c_1$-cost spanning
tree in $G$ if and only if $\phi(T)$ is an inclusionwise maximal
forest in $G'$ (that is, a base of the circuit matroid of $G'$).
\end{claimnum}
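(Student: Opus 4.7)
The approach is to express the matroid $M_{c_1}$ whose bases are the maximum $c_1$-cost spanning trees of $G$ as a direct sum of graphic matroids, one associated with each distinct value of $c_1$, and then realize this direct sum as the circuit matroid of a single graph $G'$ obtained by a disjoint union. The graph $G'$ and the bijection $\phi$ then fall out of the construction.

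Concretely, I would list the distinct values of $c_1$ in decreasing order $w_1>w_2>\cdots>w_k$, set $E_i=c_1^{-1}(w_i)$ and $E_{<i}=E_1\cup\cdots\cup E_{i-1}$, and let $G_i$ be the multigraph obtained from $G$ by contracting all edges of $E_{<i}$ and then keeping only the edges of $E_i$ (some of which may become loops). The standard greedy characterization of maximum-weight bases of a matroid yields the direct-sum decomposition $M_{c_1}=\bigoplus_i (M(G)/E_{<i})|_{E_i}$: running Kruskal on $G$ in decreasing order of $c_1$, exactly those subsets $T\subseteq E$ arise as maximum $c_1$-cost spanning trees for which $T\cap E_i$ is a maximal independent set of $E_i$ in the contracted matroid $M(G)/E_{<i}$ for every $i$. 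Since each summand $(M(G)/E_{<i})|_{E_i}$ is the circuit matroid of $G_i$, and since the direct sum of graphic matroids is graphic---realized by disjoint union of the underlying graphs---the matroid $M_{c_1}$ is the circuit matroid of $G':=G_1\sqcup G_2\sqcup\cdots\sqcup G_k$. The required bijection $\phi:E\to E(G')$ is the obvious one sending each $e\in E_i$ to its counterpart in $G_i$.

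With this setup, $T\subseteq E$ is a maximum $c_1$-cost spanning tree of $G$ if and only if $\phi(T)$ is a base of the circuit matroid of $G'$, i.e.\ an inclusionwise maximal forest in $G'$. The only mild subtlety in executing the plan lies in the bookkeeping around loops: an edge of $E_i$ whose endpoints lie in the same connected component of $(V,E_{<i})$ becomes a loop in $G_i$ and hence appears in no maximal forest of $G'$, which correctly mirrors the fact that no maximum $c_1$-cost spanning tree of $G$ can use such an edge. I do not expect any substantial obstacle beyond this, since every ingredient (Kruskal's theorem, the direct-sum decomposition of weighted-matroid bases, graphicness of contractions, and graphicness of disjoint unions) is classical; the content of the claim is essentially just the observation that these operations can be composed to produce the graph $G'$ witnessing that $M_{c_1}$ is graphic.
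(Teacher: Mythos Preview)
The paper does not actually prove this claim: it is stated as an ``observation'' and immediately applied, with no argument given. So there is no proof in the paper to compare against.

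Your approach is correct and is the standard way to establish this fact. The decomposition $M_{c_1}=\bigoplus_i (M(G)/E_{<i})|_{E_i}$ is exactly the well-known characterization of the matroid of maximum-weight bases via the greedy algorithm, and each summand is graphic because contraction and deletion preserve graphicness; disjoint union then realizes the direct sum. Your remark about loops is the only point where one must be slightly careful, and you handle it correctly. Nothing is missing.
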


Applying Claims \ref{cl:tightmaxcost} and \ref{cl:maxsptgraphic} to Problem \ref{prob:1} we get the following theorem.

\begin{theorem}
Given a digraph $D=(V,A)$ with a designated node $r\in V$, and a cost
function $c:A\to \Rset$, we can find in polynomial time a subset
$A'\subseteq A$, a graphic matroid $M_1$ and a partition matroid $M_2$
(both on ground set $A'$) so that $B\subseteq A$ is a minimum $c$-cost $r$-arborescence of $D$ if and only if $B\subseteq A'$ and it is a common base of $M_1$ and $M_2$.
\end{theorem}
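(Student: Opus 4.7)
The plan is to decompose the characterization of minimum $c$-cost $r$-arborescences derived in the paragraph preceding the theorem into a ``graphic'' part (the undirected laminar spanning-tree condition) and a ``partition'' part (the indegree-one condition), and to handle each part with one of the two matroids. First I would apply Fulkerson's theorem (Theorem \ref{thm:fulk}) to obtain in polynomial time the tight arc set $A'\subseteq A$ and the laminar family $\cL\subseteq 2^{V-r}$; after discarding from $A'$ any arc entering $r$ (such arcs cannot appear in any $r$-arborescence), the paragraph preceding the theorem gives
\[
\{B\subseteq A: B\text{ is a min-}c\text{-cost }r\text{-arborescence}\} = \{B\subseteq A' : B[F]\text{ is a spanning tree of }F\ \forall F\in\cF,\ \varrho_B(v)=1\ \forall v\in V-r\},
\]
where $\cF:=\cL\cup\{V\}$. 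I would then define $M_2$ as the partition matroid on $A'$ with partition $\{\delta^{in}_D(v)\cap A':v\in V-r\}$ and all part capacities equal to $1$; its bases are exactly the $B\subseteq A'$ with $\varrho_B(v)=1$ for every $v\in V-r$.

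Next I would construct $M_1$. Let $G=(V,E)$ be the underlying undirected (multi)graph of $(V,A')$ under the canonical bijection $A'\leftrightarrow E$, and set $c_1(e):=|\{F\in\cF: e\subseteq F\}|$. By Claim \ref{cl:tightmaxcost}, the $\cF$-tight spanning trees of $G$ are exactly the maximum $c_1$-cost spanning trees (such a tree exists, for otherwise there is no min-cost $r$-arborescence and the theorem is vacuous). Claim \ref{cl:maxsptgraphic} then provides a graph $G'$ and a bijection $\phi:E\to E(G')$ under which the maximum $c_1$-cost spanning trees of $G$ correspond to the bases of the circuit matroid of $G'$; transporting this matroid back through $\phi$ and $A'\leftrightarrow E$ defines the desired graphic matroid $M_1$ on ground set $A'$, whose bases are precisely the $\cF$-tight undirected spanning trees of $(V,A')$.

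Putting the pieces together, a common base $B$ of $M_1$ and $M_2$ is a subset of $A'$ that is simultaneously an $\cF$-tight undirected spanning tree of $(V,A')$ and satisfies $\varrho_B(v)=1$ for every $v\in V-r$; the indegree constraint promotes each undirected tree $B[F]$ into an arborescence, so by the displayed equality $B$ is a min-$c$-cost $r$-arborescence, while the reverse implication is immediate from the same equality. The one nontrivial ingredient is Claim \ref{cl:maxsptgraphic}, and this is where I expect the main obstacle; the standard layered construction (sort edges by decreasing cost, take the subgraph of top-cost edges, contract its connected components and recurse on the remaining edges, and let $G'$ be the disjoint union of the per-layer subgraphs) both proves the claim and runs in polynomial time, so together with the polynomial runtime of Fulkerson's theorem the entire construction is polynomial.
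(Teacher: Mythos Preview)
Your proposal is correct and follows essentially the same route as the paper: the paper simply states that the theorem follows by applying Claims~\ref{cl:tightmaxcost} and~\ref{cl:maxsptgraphic} to the characterization displayed just before the theorem, and you have spelled out that application in detail (including the natural choice of the partition matroid $M_2$ and the use of Fulkerson's theorem to obtain $A'$ and $\cL$). Your additional care in removing arcs entering $r$ and in sketching the layered construction behind Claim~\ref{cl:maxsptgraphic} goes slightly beyond what the paper writes, but does not diverge from its approach.
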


Thus, the problem of covering optimal $r$-arborescences (Problem
\ref{prob:1}) can be reduced to the problem of covering common bases of
a graphic matroid and a partition matroid (with some correlation
between these two matroids).  Following the reformulations above one can
probably work out a polynomial time algorithm (different from ours)
for Problem \ref{prob:1}, too. This can be  a direction for further research.


\subsubsection*{Acknowledgements}
We thank Naoyuki Kamiyama for calling our attention to this problem at
the 7th Hungarian-Japanese Symposium on Discrete Mathematics and Its
Applications in Kyoto.  We would like to thank Krist\'of B\'erczi,
András Frank, Erika Kov\'acs, Tam\'as Kir\'aly and Zolt\'an Kir\'aly
of the Egerváry Research Group for useful discussions and remarks.  A
version of this paper was presented at the 21st International
Symposium on Mathematical Programming (ISMP 2012) and at the 16th
Conference on Integer Programming and Combinatorial Optimization (IPCO
2013).

\bibliographystyle{amsplain} \bibliography{bmincostarb}

\end{document}